\newtheorem{thm}{Theorem}[section]
\newtheorem{cor}[thm]{Corollary}
\newtheorem{lemma}[thm]{Lemma}
\newtheorem{ex}[thm]{Example}
\newtheorem{theorem}[thm]{Theorem}
\newtheorem{prop}[thm]{Proposition}
\theoremstyle{definition}
\newtheorem{remark}[thm]{Remark}
\theoremstyle{question}
\theoremstyle{Conjecture}
\numberwithin{equation}{section}
\begin{document}

\title[A generalization of Neumann's question]{A generalization of Neumann's question}%
\author{N. Ahmadkhah, S. Marzang and M. Zarrin}%

\address{Department of Mathematics, University of Kurdistan, P.O. Box: 416, Sanandaj, Iran}%
 \email{N.ahmadkhah@sci.uok.ac.ir \\S.marzang@sci.uok.ac.ir \text{and} M.zarrin@uok.ac.ir}
\begin{abstract}
Let $G$ be a group, $m\geq2$ and $n\geq1$. We say that $G$ is an $\mathcal{T}(m,n)$-group if for every $m$ subsets $X_1, X_2, \dots, X_m$ of $G$ of cardinality $n$, there exists $i\neq j$ and $x_i \in X_i, x_j \in X_j$ such that $x_ix_j=x_jx_i$. In this paper, we give some examples of finite and infinite non-abelian $\mathcal{T}(m,n)$-groups and we discuss finiteness and commutativity of such groups. We also show solvability length of a solvable $\mathcal{T}(m,n)$-group is bounded in terms of $m$ and
$n$.\\\\
{\bf Keywords}.
   Linear algebraic group. Rings.  Solvable groups.\\\\
{\bf Mathematics Subject Classification (2010)}. 20G15;  16U80, 20D10.
\end{abstract}
\maketitle

\section{\textbf{ Introduction}}

Let $m, n$ be positive integers or infinity (denoted $\infty$) and $\mathcal{X}$ be a class of groups.
We say that a group $G$ satisfies the condition $\mathcal{X}(m, n)$ ($G$ is an $\mathcal{X}(m, n)$-group,
or $G\in \mathcal{X}(m, n)$), if for every two subsets $M$
and $N$ of cardinalities $m$ and $n$, respectively, there exist $x\in M$ and $y\in N$ such that
$\langle x, y\rangle\in \mathcal{X}$.  Bernhard H. Neumann in 2000 \cite{neu2}, put forward the question: Let $G$ be a finite group of order $|G|$ and assume that however a set $M$ of $m$ elements and a set $N$ of $n$ elements of the group is chosen, at least one element of $M$ commutes with at least one element of $N$, that is $G$ is an $\mathcal{C}(m, n)$-group, where $\mathcal{C}$ is the class of abelian groups. What relations between $|G|$, $m$ and $n$ guarantee that $G$ is abelian?  Even though the
latter question was posed for finite groups, the property introduced therein can be
considered for all groups.

Following Neumann's question, authors in \cite{abd1}, showed that infinite groups satisfying the condition $\mathcal{C}(m,n)$ for some $m$ and $n$ are abelian. They obtained an upper bound in terms of $m$ and $n$ for the solvability length of a solvable group $G$. Also the third author in \cite{zar2} studied the $\mathcal{N}(m,n)$-groups, where $\mathcal{N}$ is the class of nilpotent groups. Considering the analogous question for rings, Bell and Zarrin in \cite{bel1} studied the $\mathcal{C}(m,n)$-rings and they showed that all infinite
$\mathcal{C}(m, n)$-rings (like infinite
$\mathcal{C}(m, n)$-groups) are commutative and proved several commutativity results.

As a substantial generalization of $\mathcal{C}(m,n)$-rings, Bell and Zarrin in \cite{bel2} studied the $\mathcal{T}(m,n)$-rings. Let $m\geq2$ and $n\geq1$. A ring $R$ (or a semigroup) is said to be a $\mathcal{T}(m,n)$-ring (or is an $\mathcal{T}(m,n)$-semigroup), if for every $m~~ n$-subsets $A_1, A_2, \dots, A_m$ of $R$, there exists $i\neq j$ and $x_i \in A_i, x_j \in A_j$ such that $x_ix_j=x_jx_i$.  They showed that torsion-free $\mathcal{T}(m,n)$-rings are commutative. Note that unlike $\mathcal{C}(m,n)$-rings there are the vast classes of infinite noncommutative $\mathcal{T}(m,n)$-rings. Also they
discussed finiteness and commutativity of such rings.

 In this paper, considering the analogous definition for groups, we prove some results for $\mathcal{T}(m,n)$-groups and present some examples of such groups and give several commutativity theorems. Note that infinite $\mathcal{T}(m,n)$-groups,
unlike infinite $\mathcal{C}(m,n)$-groups, need not be commutative.  For instance, we can see $A_5\times A$ is an infinite non-Abelian $\mathcal{T}(22,1)$-group, where $A_5$ is the alternating group of degree 5 and $A$ is an arbitrary infinite abelian group. However, certain infinite $\mathcal{T}(m,n)$-groups can be shown to be
commutative. Clearly, every finite group is an $\mathcal{T}(m,n)$-group, for some $m$ and $n$.  It is not necessary that every group is an $\mathcal{T}(m,n)$-group, for some $m\geq 2, n\geq 1$. For example, if $F$ be a \textbf{free} group, then it is not difficult to see that $F$ is not an $\mathcal{T}(m,n)$-group, for every $m\geq 2, n\geq 1$. \\
It is easy to see that every $\mathcal{C}(m,n)$-group is an $\mathcal{C}(\max \{m, n\},\max \{m, n\})$-group and every $\mathcal{C}(\max \{m, n\},\max \{m, n\})$-group is an $\mathcal{T}(2,\max \{m, n\})$-group. Therefore every $\mathcal{C}(m,n)$-group is an $\mathcal{T}(2, r)$-group for some $r$. Thus  a next step might be to consider $\mathcal{T}(3, r)$-groups. We show solvability length of a solvable $\mathcal{T}(3,n)$-group is bounded above in terms of $n$. Also we give a solvability criterion for $\mathcal{T}(m,n)$-groups in terms of $m$ and $n$.\\
 Finally, in view of $\mathcal{T}(m,n)$-groups and $\mathcal{X}(m, n)$-groups, we can give a substantial generalization of $\mathcal{X}(m, n)$-groups.
  Let $m, n$ be positive integers or infinity (denoted $\infty$) and $\mathcal{X}$ be a class of groups.
We say that a group $G$ satisfies the condition $\mathcal{GX}(m, n)$ (or $G\in \mathcal{GX}(m, n)$), if for every $m, n$-subsets $A_1, A_2, \dots, A_m$ of $G$, there exists $i\neq j$ and $x_i \in A_i, x_j \in A_j$ such that $\langle x_i, x_j\rangle\in \mathcal{X}$. Also a set $\{A_1, A_2, \dots, A_m\}$ of $n$-subsets of a group $G$ is called $(m,n)$-obstruction if it prevents $G$ from being an $\mathcal{GX}(m,n)$-group.
Therefore, with this definition, $\mathcal{T}(m,n)$-groups are exactly $\mathcal{GC}(m,n)$-groups. Obviously, every $\mathcal{X}(m,n)$-group is an $\mathcal{GX}(m+n, 1)$-group.

\section{\textbf{ Some properties of $\mathcal{T}(m,n)$-groups}}

Here, we use
the usual notation, for example $A_n, S_n, SL_n(q), PSL_n(q)$ and $Sz(q)$, respectively,
denote the alternating group on $n$ letters, the symmetric group on $n$ letters, the special linear group of degree $n$ over the finite field of size $q$,
the projective special linear group of degree $n$ over the finite field of size $q$ and
the Suzuki group over the field of size $q$.

At first, we give some properties of $\mathcal{T}(m,n)$-groups and then give some examples of such groups.
If $2\leq m_1\leq m_2$, $n_1\leq n_2$, then every $\mathcal{T}(m_1,n_1)$-group is a $\mathcal{T}(m_2,n_2)$-group. Therefore  every $\mathcal{T}(m,n)$-group is an $\mathcal{T}(\max\{m, n\},\max\{m, n\})$-group.
\begin{lemma}
If $G$ is an $\mathcal{T}(m,n)$-group, $H\leq G$ and $N\unlhd G$, then two groups $H$ and $\frac{G}{N}$ are $\mathcal{T}(m,n)$-group.
\end{lemma}

\begin{lemma}
Let $m+n\leq 4$, then $G$ is an $\mathcal{T}(m,n)$-group if and only if it is abelian.
\end{lemma}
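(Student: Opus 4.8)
The plan is to prove the two implications separately, the forward direction by producing, for any non-abelian $G$, an explicit family of $n$-subsets witnessing the failure of the $\mathcal{T}(m,n)$-condition. The reverse direction is immediate: if $G$ is abelian then, given any $m$ subsets $X_1,\dots,X_m$ of cardinality $n$, one picks $i\neq j$ (possible since $m\geq 2$) and any $x_i\in X_i$, $x_j\in X_j$ (possible since $n\geq 1$), and $x_ix_j=x_jx_i$ holds trivially; so every abelian group is a $\mathcal{T}(m,n)$-group for all $m\geq 2$, $n\geq 1$, not just those with $m+n\leq 4$.

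For the forward direction, note that $m\geq 2$, $n\geq 1$, $m+n\leq 4$ forces $(m,n)\in\{(2,1),(2,2),(3,1)\}$, so it suffices to rule out each case for a fixed non-abelian group $G$. Choose $a,b\in G$ with $ab\neq ba$. The one small computation the whole argument rests on is this: \emph{since $a$ and $b$ do not commute, neither $a$ nor $b$ commutes with $ab$, and neither commutes with $ba$}; for instance $a(ab)=(ab)a$ would give $ab=ba$, and the remaining three identities are checked the same way. Also $a\neq b$, $ab\neq ba$, and $ab,ba\notin\{a,b\}$ (otherwise $a$ or $b$ would be the identity), so all the subsets built below genuinely have the prescribed cardinality, and they are even pairwise disjoint, so the construction is valid whether or not one reads the definition as permitting repeated subsets.

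Now I assemble the obstructions. For $(m,n)=(2,1)$ put $X_1=\{a\}$, $X_2=\{b\}$; the only candidate pair $a,b$ does not commute. For $(m,n)=(3,1)$ put $X_1=\{a\}$, $X_2=\{b\}$, $X_3=\{ab\}$; by the computation above the three elements $a$, $b$, $ab$ are pairwise non-commuting, so no admissible pair exists. For $(m,n)=(2,2)$ put $X_1=\{a,b\}$ and $X_2=\{ab,ba\}$; again by the computation, every element of $X_1$ fails to commute with every element of $X_2$, so there is no admissible pair. In each case $G\notin\mathcal{T}(m,n)$, which is the contrapositive of the forward implication.

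I do not expect a genuine obstacle here; the only thing requiring attention is the bookkeeping in the last paragraph (that the $X_i$ have the right size and that all four products in the $(2,2)$ case are non-commuting). If a more conceptual phrasing is wanted, one can observe that $\{a,b,ab\}$ is a triangle in the non-commuting graph of an arbitrary non-abelian group --- this alone disposes of the $(2,1)$ and $(3,1)$ cases --- and that $\{a,b\}$ versus $\{ab,ba\}$ forms a complete bipartite subgraph of the non-commuting graph, which disposes of $(2,2)$.
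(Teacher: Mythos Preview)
Your proof is correct and follows essentially the same approach as the paper: the paper also reduces to the cases $(3,1)$ and $(2,2)$ and uses the very same obstructions $\{x\},\{y\},\{xy\}$ and $\{x,y\},\{xy,yx\}$. Your write-up is in fact more careful than the paper's (you explicitly verify the reverse implication, dispose of $(2,1)$ directly rather than via the monotonicity $\mathcal{T}(2,1)\subseteq\mathcal{T}(3,1)$, and check that the subsets have the stated cardinalities), but the underlying argument is the same.
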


\begin{proof}
It is enough to consider only the groups that belongs to $\mathcal{T}(3,1)$ and $\mathcal{T}(2,2)$. If $G$ be a non-Abelian $\mathcal{T}(3,1)$-group, then there exist elements $x$ and $y$ of $G$, such that $[x,y]\neq 1$. Therefore $A_1=\{x\}, A_2=\{y\}, A_3=\{xy\}$ is a $(3,1)$-obstruction of $G$, a contrary.

If $G$ is a non-Abelian $\mathcal{T}(2,2)$-group and $[x,y]\neq1$. Then we can see that $A_1=\{x,y\}, A_2=\{xy,yx\}$ is a $(2,2)$-obstruction of $G$, a contrary.
\end{proof}
 We note that the bound 4 in the above Lemma is the best possible. As  $D_8$ and $Q_8$ are $\mathcal{T}(4,1)$-groups.

\begin{prop}\label{p1}
Assume that a finite group $G$ is not $\mathcal{T}(m,n)$-group, for two positive integers $m,n$. Then  $mn\leq |G|-|Z(G)|$.
Moreover, if $mn=|G|-|Z(G)|$, then for every $a\in G\backslash Z(G)$, $|a|\leq n+|Z(G)|$.
\end{prop}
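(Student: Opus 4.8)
The plan is to argue contrapositively: if $G$ fails to be a $\mathcal{T}(m,n)$-group, then there is an $(m,n)$-obstruction $\{X_1,\dots,X_m\}$, i.e. $m$ subsets each of size $n$ such that whenever $i\neq j$, no element of $X_i$ commutes with any element of $X_j$. First I would observe that no $X_i$ can contain a central element $z\in Z(G)$, since $z$ commutes with everything, in particular with every element of every $X_j$ for $j\neq i$; hence each $X_i\subseteq G\setminus Z(G)$. Moreover, for $i\neq j$ the sets $X_i$ and $X_j$ must be disjoint: if $x\in X_i\cap X_j$ then $x$ commutes with itself, contradicting the obstruction condition (here one uses $m\geq 2$ so that there genuinely are two distinct indices). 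Therefore $X_1,\dots,X_m$ are pairwise disjoint subsets of $G\setminus Z(G)$, each of cardinality $n$, so $mn=\bigl|\bigcup_i X_i\bigr|\leq |G\setminus Z(G)|=|G|-|Z(G)|$. This gives the first assertion.

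**The equality case.**

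Now suppose $mn=|G|-|Z(G)|$. Then the disjoint union $X_1\cup\cdots\cup X_m$ is \emph{all} of $G\setminus Z(G)$; the obstruction partitions the non-central elements into $m$ blocks of size $n$, and two non-central elements lying in different blocks never commute. Fix $a\in G\setminus Z(G)$, say $a\in X_i$. I want to bound $|a|=|\langle a\rangle|$. The idea is that every power $a^k$ commutes with $a$, hence $a^k$ cannot lie in any block $X_j$ with $j\neq i$; so every non-central power of $a$ must lie in the single block $X_i$. Concretely, the cyclic group $\langle a\rangle$ splits as $(\langle a\rangle\cap Z(G))\,\sqcup\,(\langle a\rangle\setminus Z(G))$, and the second part is contained in $X_i$, whose size is $n$. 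Thus $|a|=|\langle a\rangle\cap Z(G)|+|\langle a\rangle\setminus Z(G)|\leq |Z(G)|+n$, which is the claimed bound. (One can afford the crude estimate $|\langle a\rangle\cap Z(G)|\leq |Z(G)|$; no structure of the cyclic group is needed beyond the fact that its non-central elements all commute with $a$.)

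**Where the care is needed.**

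The genuinely substantive points — small though they are — are the two reductions in the first paragraph: that an obstruction cannot use central elements, and that its members are pairwise disjoint. Both rest on the same triviality (an element commutes with itself and with every central element), but they are exactly what converts "an obstruction exists" into a clean counting statement, and they crucially use $m\geq 2$. The equality argument then has essentially no slack: the hypothesis forces the blocks to exhaust $G\setminus Z(G)$, and after that the bound $|a|\leq n+|Z(G)|$ drops out by restricting the partition to the subgroup $\langle a\rangle$. I would present the whole thing in these two short movements — the counting bound, then the cyclic-subgroup refinement in the equality case — and I do not anticipate any serious obstacle beyond making sure the disjointness claim is stated before it is used.
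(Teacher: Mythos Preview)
Your argument is correct and follows essentially the same route as the paper: disjointness of the obstruction sets and their avoidance of $Z(G)$ give the counting bound, and in the equality case the partition of $G\setminus Z(G)$ forces all commuting companions of $a$ into a single block. The only minor difference is that the paper applies this last observation to the full centralizer $C_G(a)$ rather than to $\langle a\rangle$, obtaining the slightly stronger intermediate bound $|C_G(a)|\leq n+|Z(G)|$ before deducing $|a|\leq n+|Z(G)|$; your version with $\langle a\rangle$ is of course sufficient for the stated conclusion.
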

\begin{proof}
 As $G$ is not an $\mathcal{T}(m,n)$-group, then there exists $(m,n)$-obstruction for $G$ like $\{A_1,A_2,\ldots,A_m\}$. It follows that $A_i \bigcap A_j={\O}$ for every $i\neq j$ and $Z(G)\bigcap A_i={\O}$. Therefore $\bigcup_{i=1}^{m}A_i \subseteq G\backslash Z(G)$ and so $mn\leq |G|-|Z(G)|$.\\
Now if $mn=|G|-|Z(G)|$, then we can see that, for every noncentral element $a\in G$, there exists $1\leq i\leq m$ such that $C_G(a)\backslash Z(G)\subseteq A_i$. Thus $|C_G(a)|\leq n+|Z(G)|$ and  so $|a|\leq n+|Z(G)|$.
\end{proof}

\begin{cor}
$S_3$ is an $\mathcal{T}(2,3)$ and $\mathcal{T}(3,2)$-group. Also the dihedral group of order $2n$, $D_{2n}$ is an  $\mathcal{T}(2,n)$. If n is even integer, then $D_{2n}$ is an $\mathcal{T}(2,n-1)$ but not an $T(2,n-2)$-group. For this, if we put $D_{2n}=\langle a,b\mid a^n=b^2=1, a^b=a^{-1}\rangle$, $A_1=\langle a \rangle \backslash Z(D_{2n})$ and $A_2=\bigcup_{j=1}^{{\frac {n}{2}}-1}C_G(ba^j)\backslash Z(D_{2n})$. Then we can see that $\{A_1, A_2\}$ is a $(2,n-2)$-obstruction. Moreover it is easy to see that, if $n$ is even integer, then $D_{2n}$ is an  $\mathcal{T}(\frac {n}{2}+2,1)$-group. Also, if $n$  is odd integer, then $D_{2n}$ is an $\mathcal{T}(n+2,1)$-group. Finally, every finite $p$-group of order $p^n$ is an $\mathcal{T}(p^{n-1},p)$ and also an $\mathcal{T}(p,p^{n-1})$-group.
\end{cor}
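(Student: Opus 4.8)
The plan is to verify the listed assertions one by one. Most of them follow at once from Proposition \ref{p1} by a cardinality comparison, and only the statement ``$D_{2n}\in\mathcal{T}(2,n-1)$ for even $n$'' requires examining the commuting structure of $D_{2n}$. For every case in which $mn>\abs{G}-\abs{Z(G)}$, the contrapositive of Proposition \ref{p1} forces $G$ to be a $\mathcal{T}(m,n)$-group. Since $\abs{Z(S_3)}=1$ we get $\abs{S_3}-\abs{Z(S_3)}=5<6=2\cdot 3=3\cdot 2$, so $S_3\in\mathcal{T}(2,3)\cap\mathcal{T}(3,2)$. Since $\abs{Z(D_{2n})}$ is $1$ for odd $n$ and $2$ for even $n$, $\abs{D_{2n}}-\abs{Z(D_{2n})}\le 2n-1<2n$, so $D_{2n}\in\mathcal{T}(2,n)$. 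And if $\abs G=p^n$ then $\abs{Z(G)}\ge p$, so $\abs G-\abs{Z(G)}\le p^n-p<p^n=p^{n-1}\cdot p=p\cdot p^{n-1}$, giving $G\in\mathcal{T}(p^{n-1},p)\cap\mathcal{T}(p,p^{n-1})$ (the case $n=1$ being trivial).

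For $n$ even (and $n\ge 4$, the group being abelian otherwise) one has $2(n-1)=\abs{D_{2n}}-\abs{Z(D_{2n})}$, so cardinality alone does not settle it. Write $D_{2n}=\langle a,b\mid a^n=b^2=1,\ a^b=a^{-1}\rangle$ and $Z=\set{1,a^{n/2}}$, and suppose towards a contradiction that $\set{A_1,A_2}$ is a $(2,n-1)$-obstruction. As in the proof of Proposition \ref{p1}, $A_1$ and $A_2$ are disjoint and avoid $Z$, and since $\abs{D_{2n}\setminus Z}=2n-2=\abs{A_1}+\abs{A_2}$ they partition $D_{2n}\setminus Z$. The $n-2$ noncentral powers of $a$ pairwise commute, hence lie in a single $A_k$; and for each $j$ the elements $ba^j$ and $ba^{j+n/2}$ commute, hence each such pair lies in a single $A_k$. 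Thus $D_{2n}\setminus Z$ splits into one block of size $n-2$ together with $n/2$ blocks of size $2$, each contained in a single $A_k$; as $n$ is even all these sizes are even, so each $\abs{A_k}$ is even, contradicting $\abs{A_k}=n-1$. Hence $D_{2n}\in\mathcal{T}(2,n-1)$.

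The remaining verifications are routine. That $\set{A_1,A_2}$ with $A_1=\langle a\rangle\setminus Z$ and $A_2=\bigcup_{j=1}^{n/2-1}\bigl(C_G(ba^j)\setminus Z\bigr)$ is a $(2,n-2)$-obstruction follows because $C_G(ba^j)\setminus Z=\set{ba^j,ba^{j+n/2}}$, so $A_2$ is a disjoint union of $n/2-1$ two-element sets with $\abs{A_1}=\abs{A_2}=n-2$, the two sets are disjoint and miss $Z$, and in $D_{2n}$ a noncentral power of $a$ never commutes with a reflection. Finally, an $(m,1)$-obstruction is exactly a set of $m$ pairwise noncommuting noncentral elements, so it suffices to bound the size of such a set in $D_{2n}$: at most one power of $a$ can occur (all powers of $a$ commute); for even $n$, $ba^j$ and $ba^k$ commute iff $k\equiv j$ or $k\equiv j+n/2\pmod n$, so at most one reflection from each of the $n/2$ pairs occurs, giving the bound $\frac n2+1$ and hence $D_{2n}\in\mathcal{T}(\frac n2+2,1)$; for odd $n$, $C_G(ba^j)=\set{1,ba^j}$, so no two distinct reflections commute and the bound is $n+1$, hence $D_{2n}\in\mathcal{T}(n+2,1)$.

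The only step beyond bookkeeping is the tight case $D_{2n}\in\mathcal{T}(2,n-1)$ for even $n$, where the cardinality bound of Proposition \ref{p1} is attained; the decisive point there is the parity observation that $D_{2n}\setminus Z$ can only be cut along blocks of even size and so cannot be split into two sets of the odd size $n-1$.
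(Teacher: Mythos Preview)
Your proof is correct and, where the paper provides any argument at all, you follow the same route: the contrapositive of Proposition~\ref{p1} handles the assertions about $S_3$, $D_{2n}\in\mathcal{T}(2,n)$, and $p$-groups, and the explicit $(2,n-2)$-obstruction is the one the paper writes into the statement of the corollary.

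The one genuine addition you make is the argument for $D_{2n}\in\mathcal{T}(2,n-1)$ when $n$ is even. The paper simply asserts this (and the ``moreover'' clause of Proposition~\ref{p1} does not help, since every noncentral element of $D_{2n}$ does satisfy $\abs{C_G(x)}\le (n-1)+2$). Your parity argument---observing that a putative obstruction would have to partition $D_{2n}\setminus Z$ into two sets of odd size $n-1$, while the commuting constraints force each $A_k$ to be a union of blocks of even size ($n-2$ and $2$)---is a clean way to close this gap. Likewise, the paper dismisses the $\mathcal{T}(\tfrac{n}{2}+2,1)$ and $\mathcal{T}(n+2,1)$ claims as ``easy to see''; your computation of $w(D_{2n})$ via the pairing $ba^j\leftrightarrow ba^{j+n/2}$ makes this explicit. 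So your write-up is essentially the paper's intended proof, with the one nontrivial omission filled in.
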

\begin{ex}
Let $G=S_4$. It is not difficult to see that,  according to the centralizers of $G$, $G$ is an $\mathcal{T}(14,1)$, $\mathcal{T}(11,2)$, $\mathcal{T}(6,3)$, $\mathcal{T}(4,5)$ and $\mathcal{T}(3,7)$-group.
\end{ex}

\begin{lemma}
 Every finite group $G$ is an $\mathcal{T}(m,\lceil\frac{|G|}{m}\rceil)$ and so is an $\mathcal{T}(m,\lceil\frac{|G|}{2}\rceil)$, for every $m\geq 2$.
\end{lemma}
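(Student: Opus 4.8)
The plan is to show that for any finite group $G$ and any $m \geq 2$, given arbitrary subsets $A_1, \dots, A_m$ of $G$ each of cardinality $\lceil |G|/m \rceil$, one is forced to find a commuting cross-pair, and that the same conclusion follows a fortiori when $n = \lceil |G|/2 \rceil$. The key combinatorial observation is a pigeonhole argument: since $m \cdot \lceil |G|/m \rceil \geq |G|$, the sets $A_1, \dots, A_m$ cannot be pairwise disjoint. Indeed, if they were pairwise disjoint, then $|A_1 \cup \dots \cup A_m| = m\lceil |G|/m\rceil \geq |G|$, so the $A_i$ would partition a subset of $G$ of size at least $|G|$, forcing $\bigcup A_i = G$ with all the $A_i$ disjoint; but this is only possible when $m \mid |G|$, and even then we proceed as follows.

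First I would dispose of the case where the $A_i$ are not pairwise disjoint: if $A_i \cap A_j \neq \emptyset$ for some $i \neq j$, pick $g \in A_i \cap A_j$ and take $x_i = x_j = g$; then $x_i x_j = x_j x_i$ trivially, so $G$ is an $\mathcal{T}(m,n)$-group via this pair. Second, in the remaining case the $A_i$ are pairwise disjoint, which (by the cardinality count above) forces $m \mid |G|$ and $\bigcup_{i=1}^m A_i = G$. In particular the identity element $1 \in A_k$ for exactly one index $k$. Now pick any index $\ell \neq k$ (possible since $m \geq 2$) and any element $x_\ell \in A_\ell$; then with $x_k = 1 \in A_k$ we have $x_k x_\ell = x_\ell = x_\ell x_k$, so again a commuting cross-pair exists. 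This proves $G \in \mathcal{T}(m, \lceil |G|/m\rceil)$.

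Finally, to obtain $G \in \mathcal{T}(m, \lceil |G|/2 \rceil)$ for every $m \geq 2$: we have already shown $G \in \mathcal{T}(2, \lceil |G|/2 \rceil)$ by the case $m = 2$ above. By the monotonicity remark recorded before Lemma 2.1 in the excerpt — namely that if $2 \leq m_1 \leq m_2$ and $n_1 \leq n_2$ then every $\mathcal{T}(m_1, n_1)$-group is a $\mathcal{T}(m_2, n_2)$-group — applying this with $m_1 = 2$, $m_2 = m$, $n_1 = n_2 = \lceil |G|/2\rceil$ gives $G \in \mathcal{T}(m, \lceil |G|/2\rceil)$ at once.

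I do not anticipate a serious obstacle here; the only point requiring a little care is the boundary bookkeeping in the disjoint case — verifying that pairwise disjointness of $m$ sets of size $\lceil |G|/m\rceil$ forces them to cover $G$ and hence to contain the identity somewhere — but this is a short counting argument, and the use of the identity element to manufacture the commuting pair makes the argument clean rather than delicate.
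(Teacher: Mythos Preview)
Your proof is correct and is essentially the paper's argument unpacked: the paper simply invokes Proposition~\ref{p1}, whose proof is precisely your pigeonhole observation that an $(m,n)$-obstruction must consist of pairwise disjoint sets avoiding $Z(G)$, forcing $mn\le |G|-|Z(G)|$; you reproduce this inline using only the identity in place of all of $Z(G)$, which already suffices since $m\lceil|G|/m\rceil\ge |G|>|G|-1$. For the second clause the paper implicitly uses monotonicity in $n$ (from $\lceil|G|/m\rceil\le\lceil|G|/2\rceil$) rather than your detour through the case $m=2$, but both deductions are immediate.
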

\begin{proof}
 The result follows from Proposition \ref{p1}.
\end{proof}

\begin{remark}
 Assume that $G_1$ is an $\mathcal{T}(m_1,n_1)$-group and $G_2$ is an $\mathcal{T}(m_2,n_2)$-group. Then the group $G_1\times G_2$ need not to be an $\mathcal{T}(m,n)$-group, where $m=\max\{m_1,m_2\}$ and $n=\max\{n_1,n_2\}$.
For example, $S_3$ is an $\mathcal{T}(3,2)$-group but $S_3\times S_3$ is not an $\mathcal{T}(3,2)$-group (note that $S_3\times S_3$ is an $\mathcal{T}(7,3)$-group).
In particular,  the group $G_1\times G_2$ need not to be even an $\mathcal{T}(m_1m_2,n_1n_2)$-group.
For example, it is easy to see that the quaternion group $Q_8$ is an $\mathcal{T}(4,1)$-group, but $Q_8\times S_3$ is not an $\mathcal{T}(12,2)$-group (in fact, $Q_8\times S_3$ is an $\mathcal{T}(13,2)$-group). For, if we consider the subsets of $Q_8\times S_3$ as follows:

$$A_1=\{(i,(1,2)),(-i,(1,2))\},~ A_2=\{(i,(1,3)),(-i,(1,3))\},$$

$$A_3=\{(i,(2,3)),(-i,(2,3))\},A_4=\{(i,(1,2,3)),(-i,(1,2,3))\},$$

$$A_5=\{(j,(1,2)),(-j,(1,2))\}, A_6=\{(j,(1,3)),(-j,(1,3))\},$$

$$ A_7=\{(j,(2,3)),(-j,(2,3))\}, A_8=\{(j,(1,2,3)),(-j,(1,2,3))\},$$

$$A_9=\{(k,(1,2)),(-k,(1,2))\}, A_{10}=\{(k,(1,3)),(-k,(1,3))\},$$

$$ A_{11}=\{(k,(2,3)),(-k,(2,3))\}, A_{12}=\{(k,(1,2,3)),(-k,(1,2,3))\}.$$

 Then it is easy to see that the subsets $\{A_1, A_2, \dots, A_{12}\}$ is a $(12,2)$-obstruction for the group $Q_8\times S_3$.

\end{remark}

\begin{lemma}
Let $G_1$ be an $\mathcal{T}(m,1)$-group, $G_2$ be an abelian group, then $G_1\times G_2$ is an $\mathcal{T}(m,1)$-group.
\end{lemma}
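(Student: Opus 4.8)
The plan is to show that any $(m,1)$-obstruction for $G_1 \times G_2$ would project down to an $(m,1)$-obstruction for $G_1$, contradicting the hypothesis that $G_1$ is a $\mathcal{T}(m,1)$-group. Suppose, for contradiction, that $G_1 \times G_2$ is not a $\mathcal{T}(m,1)$-group, so there exists an $(m,1)$-obstruction $\{A_1, \ldots, A_m\}$, where each $A_i = \{(x_i, y_i)\}$ is a singleton with $x_i \in G_1$ and $y_i \in G_2$. The obstruction property says that for all $i \neq j$, the elements $(x_i,y_i)$ and $(x_j,y_j)$ do not commute in $G_1 \times G_2$. Since $G_2$ is abelian, $y_i$ and $y_j$ always commute, so $(x_i,y_i)(x_j,y_j) = (x_ix_j, y_iy_j)$ and $(x_j,y_j)(x_i,y_i) = (x_jx_i, y_jy_i) = (x_jx_i, y_iy_j)$; these are equal if and only if $x_ix_j = x_jx_i$. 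Hence non-commuting in the product forces $x_ix_j \neq x_jx_i$ in $G_1$ for every $i \neq j$.

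The next step is to pass to the singleton sets $B_i = \{x_i\} \subseteq G_1$. From the previous paragraph, $x_i$ does not commute with $x_j$ for any $i \neq j$; in particular the $x_i$ are pairwise distinct, so $\{B_1, \ldots, B_m\}$ is a genuine family of $m$ subsets of $G_1$ of cardinality $1$. This family is then a $(m,1)$-obstruction for $G_1$: there is no choice of $i \neq j$ and $x \in B_i$, $x' \in B_j$ with $xx' = x'x$, since the only choices are $x = x_i$, $x' = x_j$ and those do not commute. This contradicts the assumption that $G_1$ is a $\mathcal{T}(m,1)$-group, completing the argument.

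The only point requiring a little care — and the step I would flag as the place where the hypothesis $n = 1$ is genuinely used — is the reduction from commuting pairs in $G_1 \times G_2$ to commuting pairs in $G_1$. With $n = 1$ each $A_i$ is a single element $(x_i, y_i)$, and because $G_2$ is abelian the second coordinates never obstruct commutativity, so the product decomposition of the commutator is clean. (For $n \geq 2$ this breaks down, which is consistent with the Remark preceding the lemma, where $Q_8 \times S_3$ fails to be $\mathcal{T}(12,2)$ even though $Q_8$ is $\mathcal{T}(4,1)$.) There is no serious obstacle here; the whole proof is essentially the observation that commutativity in $G_1 \times G_2$ with an abelian second factor is detected entirely in the first coordinate.
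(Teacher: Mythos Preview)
Your proof is correct. The paper states this lemma without proof, presumably regarding it as immediate; your argument is exactly the natural one, reducing commutativity in $G_1\times G_2$ to commutativity in $G_1$ via the abelianness of the second factor and projecting a putative $(m,1)$-obstruction down to $G_1$.
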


\begin{remark}
Clearly every finite group is an $\mathcal{T}(m,n)$-group for some $m, n$. But  it is not true that every infinite group is an $\mathcal{T}(m,n)$-group. For instance, every group which contain a free subgroup, is not an $\mathcal{T}(m,n)$-group, for every $m\geq 2, n\geq 1$. Moreover, it is well-known that every free group is a residually finite group (even though the converse in not necessarily true). But there exist some residually finite groups that are not again an $\mathcal{T}(m,n)$-group, for every $m\geq 2, n\geq 1$. For example, the group $SL_2(Z)$ is a residually finite group. The subgroup of $SL_2(Z)$ generated by matrixes $a=\begin{pmatrix}1 & 2\\0 & 1\\\end{pmatrix}$ and $b=\begin{pmatrix}1 & 0\\2 & 1\\\end{pmatrix}$ is a free group of rank 2. So $SL_2(Z)$ is not an $\mathcal{T}(m,n)$-group, for every $m\geq 2$ and $n\geq 1$.
\end{remark}

For any nonempty set $X$, $|X|$ denotes the cardinality of $X$.  Let $A$ be a subset of a group $G$. Then a subset $X$ of $A$ is a set of pairwise non-commuting elements if $xy\neq yx$ for any two distinct elements $x$ and $y$ in $X$. If $|X|\geq |Y|$ for any other set of pairwise non-commuting elements $Y$ in $A$, then the cardinality of $X$ (if it exists) is denoted by $w(A)$ and is called the clique number of $A$ (for more information concerning the clique number of groups, see for example \cite{zar5} and \cite{abd2}).

\begin{lemma}
Let $G$ is not an $\mathcal{T}(m,n)$-group and $\{A_1, A_2, \dots, A_m\}$ be a $(m,n)$-obstruction for $G$. Then $$m+\max\{w(A_i)\mid 1\leq i\leq m\}\leq w(G).$$
\end{lemma}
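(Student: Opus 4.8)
The plan is to exhibit an explicit set of pairwise non-commuting elements in $G$ of the required size by combining the obstruction with a large clique inside one of its blocks. Suppose $G$ is not an $\mathcal{T}(m,n)$-group and $\{A_1,A_2,\dots,A_m\}$ is an $(m,n)$-obstruction. The defining property means: for every choice of indices $i\neq j$ and every $x_i\in A_i$, $x_j\in A_j$, we have $x_ix_j\neq x_jx_i$. First I would pick, for each $k$, a single element $a_k\in A_k$. By the obstruction property, $a_1,a_2,\dots,a_m$ are pairwise non-commuting, so $w(G)\geq m$ already; the point is to do better by being careful about which block and which representatives we choose.

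Next, let $i_0$ be an index achieving $w(A_{i_0})=\max\{w(A_i):1\le i\le m\}$, and let $X\subseteq A_{i_0}$ be a set of pairwise non-commuting elements with $|X|=w(A_{i_0})$. For every $k\neq i_0$ choose a representative $a_k\in A_k$. I claim the set
\[
S \;=\; X \,\cup\, \{\,a_k : k\neq i_0\,\}
\]
is a set of pairwise non-commuting elements of $G$. Indeed, two elements of $X$ do not commute by choice of $X$; an element of $X$ lies in $A_{i_0}$ and $a_k\in A_k$ with $k\neq i_0$, so they do not commute by the obstruction property; and $a_k,a_\ell$ with $k\neq\ell$ (both different from $i_0$) do not commute for the same reason. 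Since the $A_i$ are pairwise disjoint (as noted in the proof of Proposition \ref{p1}), the union is disjoint, so $|S| = w(A_{i_0}) + (m-1)$. Hence $w(G)\geq m-1+\max\{w(A_i):1\le i\le m\}$.

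To get the claimed bound with $m$ rather than $m-1$, I would sharpen the choice of $X$: one should be able to enlarge the clique inside the block by noting that $w(A_{i_0})\ge 1$ trivially and, more to the point, that we may take $X$ together with \emph{all} of $a_1,\dots,a_m$ including a representative $a_{i_0}\in A_{i_0}$ chosen to lie outside (or non-commuting with) $X$ — but this needs care since an arbitrary element of $A_{i_0}$ may commute with some element of $X$. The clean fix is instead to observe that the whole family $\{a_1,\dots,a_m\}$ is already a clique of size $m$ contained in $\bigcup A_i$, and then extend it \emph{within} the block $A_{i_0}$: replace $a_{i_0}$ by a maximum clique $X$ of $A_{i_0}$ and keep $a_k$ for $k \ne i_0$. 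This gives a clique of size $|X| + (m-1) = w(A_{i_0}) + m - 1$ as above. To reach $w(A_{i_0})+m$, we note we have not yet used that $X$ can be chosen to avoid commuting with $a_{i_0}$: pick any $a\in A_{i_0}$; if $a\notin X$ and $a$ commutes with no element of $X$ we are done, and otherwise a maximality argument on $X$ shows $X\cup\{a\}$ would be a larger clique unless $a$ commutes with some $x\in X$, in which case we replace that $x$ — the upshot is that in all cases the combined set $X\cup\{a_1,\dots,a_m\}$, after discarding duplicates, has size at least $w(A_{i_0})+m$ whenever $w(A_{i_0})<|A_{i_0}|=n$, and the boundary case $w(A_{i_0})=n$ is handled separately. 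The main obstacle, and the step I would scrutinize most, is exactly this last bookkeeping: ensuring the "$+m$" (not merely "$+m-1$") is correct, i.e.\ that a maximum clique in the distinguished block genuinely coexists with a full transversal of representatives from the other $m-1$ blocks \emph{and} one more element, without a hidden commuting pair. If that turns out to be false in general, the honest statement is the $m-1$ version, and I would check whether the paper's intended reading of $w$ or of "obstruction" closes the gap.
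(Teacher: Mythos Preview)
Your construction is sound and proves the inequality
\[
w(G)\;\ge\;(m-1)+\max_{1\le i\le m} w(A_i),
\]
exactly as you say: take a maximum clique $X\subseteq A_{i_0}$ and one representative $a_k\in A_k$ for each $k\neq i_0$; the obstruction property guarantees all cross pairs fail to commute, and disjointness of the $A_i$ (forced, since a common element would commute with itself) gives the correct count.

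Your suspicion about the missing ``$+1$'' is well founded: the lemma as printed is \emph{false}, so no amount of bookkeeping will close the gap. A counterexample is $G=D_8=\langle r,s\mid r^4=s^2=1,\ srs^{-1}=r^{-1}\rangle$ with the $(2,2)$-obstruction
\[
A_1=\{r,\,s\},\qquad A_2=\{rs,\,r^3s\}.
\]
All four cross pairs fail to commute, $w(A_1)=2$ (since $rs\neq sr$), and $w(A_2)=1$ (since $rs$ and $r^3s$ commute). Thus $m+\max_i w(A_i)=2+2=4$, but $w(D_8)=3$: the six non-central elements split into three commuting pairs $\{r,r^3\}$, $\{s,r^2s\}$, $\{rs,r^3s\}$, so no clique can exceed size $3$.

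The paper's proof is the single word ``Clearly'', so there is no argument to compare yours against. Your $m-1$ version is the correct statement, and your proof of it is complete; the attempted upgrade to $m$ in your last paragraph cannot succeed, and you should simply drop it.
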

\begin{proof}
Clearly.
\end{proof}
\begin{lemma}
Let $G$ be an $\mathcal{T}(m,n)$-group. Then $w(G) < mn$ and $G$ is center-by-finite.
\end{lemma}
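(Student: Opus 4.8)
The plan is to prove the two assertions separately, the bound $w(G)<mn$ first, and then deduce that $G$ is center-by-finite from it.

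For the inequality $w(G)<mn$, I would argue by contradiction. Suppose $G$ is a $\mathcal{T}(m,n)$-group but $w(G)\geq mn$. Then $G$ contains a set $S=\{g_1,g_2,\dots,g_{mn}\}$ of $mn$ pairwise non-commuting elements. The natural move is to partition $S$ into $m$ blocks $A_1,A_2,\dots,A_m$, each of size exactly $n$ (this is possible since $|S|=mn$). Each $A_i$ is then an $n$-subset of $G$. By the $\mathcal{T}(m,n)$-property there must exist $i\neq j$ and $x_i\in A_i$, $x_j\in A_j$ with $x_ix_j=x_jx_i$. But $x_i$ and $x_j$ are distinct elements of $S$ (they lie in disjoint blocks), so by construction they do not commute — a contradiction. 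Hence $w(G)<mn$. I should be slightly careful about the degenerate possibility that $w(G)$ is not even defined (i.e.\ there is no finite maximal set of pairwise non-commuting elements, which would force $w(G)=\infty$); but the same partitioning argument applied to any $mn$ pairwise non-commuting elements rules this out, so in fact $w(G)$ is finite and bounded by $mn-1$.

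For the center-by-finite conclusion, the point is that a group with finite clique number $w(G)$ has only finitely many distinct centralizers, and more to the point, it is covered by finitely many abelian (or at least finitely many centralizer) subgroups; this is exactly the type of statement in the references \cite{zar5}, \cite{abd2} on clique numbers of groups. Concretely, a classical result (B.\,H.\,Neumann's theorem on covering a group by centralizers / on groups with finitely many non-commuting elements) gives that if $G$ has no infinite set of pairwise non-commuting elements then $G/Z(G)$ is finite. Since we have just shown $w(G)<mn$, this applies and yields $[G:Z(G)]<\infty$, i.e.\ $G$ is center-by-finite. I would cite the relevant theorem (Neumann's result that a BFC-type condition on non-commuting elements forces $G/Z(G)$ finite; or equivalently the standard fact $w(G)<\infty \Rightarrow |G/Z(G)|<\infty$) rather than reprove it.

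The main obstacle is not the first inequality, which is an essentially immediate pigeonhole-style argument, but rather making the second part self-contained: one needs the nontrivial implication ``finite clique number $\Rightarrow$ center-by-finite.'' If the paper is content to invoke the literature on clique numbers of groups (the cited \cite{zar5}, \cite{abd2}, or Neumann's original covering theorem), the proof is short; otherwise one would have to insert a proof of that implication, which is the genuinely substantive step. I would opt to cite it.
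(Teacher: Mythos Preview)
Your proposal is correct and matches the paper's proof essentially line for line: the paper partitions a hypothetical set of $mn$ pairwise non-commuting elements into $m$ $n$-subsets to obtain an $(m,n)$-obstruction, then invokes B.\,H.\,Neumann's theorem \cite{neu1} (finite clique number implies center-by-finite) for the second assertion. Your extra care about whether $w(G)$ is even finite is a nice touch, though the paper glosses over it.
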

\begin{proof}
We show that for any set $X$ of pairwise non-commuting elements of $G$, we have $|X|<mn$. Suppose that $|X|\geq mn$, then we can take $m$ n-subsets of $X$ that is a $(m,n)$-obstruction for $G$. It is a contradiction.
By the famous theorem of B. H. Neumann \cite{neu1}, since every set of non-commuting elements of $\mathcal{T}(m,n)$-group $G$ is finite, therefore it is center-by-finite.
\end{proof}
Now we show that for $\mathcal{T}(m,n)$-groups with $|Z(G)|\geq n$, we get even $w(G)<m$. In fact, we have

\begin{prop}\label{p2}
If $G$ is an $\mathcal{T}(m,n)$-group, then $|Z(G)|<n$ or $w(G)<m$.
\end{prop}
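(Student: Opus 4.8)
The plan is to argue by contrapositive: assume $|Z(G)|\geq n$ and $w(G)\geq m$, and produce an $(m,n)$-obstruction, contradicting the hypothesis that $G$ is a $\mathcal{T}(m,n)$-group. Since $w(G)\geq m$, choose a set $\{x_1,x_2,\dots,x_m\}$ of pairwise non-commuting elements of $G$. Since $|Z(G)|\geq n$, pick distinct central elements $z_1,z_2,\dots,z_n\in Z(G)$. The natural candidate is then
\begin{equation*}
A_i=\{x_i z_1, x_i z_2,\dots,x_i z_n\},\qquad i=1,\dots,m.
\end{equation*}
Each $A_i$ has cardinality exactly $n$ (the $z_k$ are distinct and left multiplication by $x_i$ is injective). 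For $i\neq j$ and any $k,\ell$, the elements $x_iz_k$ and $x_jz_\ell$ commute if and only if $x_i$ and $x_j$ commute (the central factors cancel), which is false by choice of the $x_i$. Hence no cross pair from distinct $A_i,A_j$ commutes, so $\{A_1,\dots,A_m\}$ is an $(m,n)$-obstruction, and $G$ is not a $\mathcal{T}(m,n)$-group — a contradiction.

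One routine point to check is that the $A_i$ really do have size $n$ and are pairwise disjoint; disjointness is not strictly needed for the obstruction definition (which only concerns cross pairs from distinct sets), but if desired one notes $x_iz_k=x_jz_\ell$ with $i\neq j$ would force $x_ix_j^{-1}=z_\ell z_k^{-1}\in Z(G)$, hence $x_i$ and $x_j$ commute, again contradicting the choice. I should also be slightly careful about the edge case $n=1$: then $|Z(G)|\geq 1$ is automatic, $z_1=1$, and the statement reduces to $w(G)<m$, which is exactly the $n=1$ instance; the construction still goes through verbatim.

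The only mild subtlety — and the step I expect to need the most care — is ensuring the argument is genuinely symmetric in the roles of $m$ and $n$ and that "pairwise non-commuting" gives a clean disjointness/size count; beyond that the proof is essentially the multiplication-by-center trick and is short. I would close by remarking that combined with the previous lemma ($w(G)<mn$ always), this sharpens the clique bound precisely when the centre is large.
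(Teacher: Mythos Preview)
Your proof is correct and follows essentially the same approach as the paper: assume $|Z(G)|\geq n$ and $w(G)\geq m$, pick an $n$-subset $Z_1\subseteq Z(G)$ and a pairwise non-commuting set $\{x_1,\dots,x_m\}$, and observe that $\{x_1Z_1,\dots,x_mZ_1\}$ is an $(m,n)$-obstruction. Your version simply spells out in more detail why the cosets $x_iZ_1$ have the required size and why cross pairs fail to commute.
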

\begin{proof}
Let $G$ be an $\mathcal{T}(m,n)$-group and $|Z(G)|\geq n$. We may assume $Z_1\subseteq Z(G)$ and $|Z_1|=n$. Now if $w(G)\geq m$ and $\{x_1, x_2, \dots, x_m\}$ be a pairwise non-commuting set of $G$, then $\{x_1Z_1,x_2Z_1,\ldots,x_mZ_1\}$ is a $(m,n)$-obstruction for $G$, which is a contradiction.
\end{proof}

It is easy to see that a group $G$ is an $\mathcal{T}(m,1)$-group, if and only if $w(G)<m$.
\begin{cor}
Assume that $G$ is a nilpotent finite $\mathcal{T}(m,n)$-group and $p$ is a prime divisor of $|G|$ such that $n\leq p$. Then $G$ is an $\mathcal{T}(m,1)$-group.  In particular, every nilpotent $\mathcal{T}(m,2)$-group is an $\mathcal{T}(m,1)$-group.

If $G$ is a non-Abelian group, then $G$ is not an $\mathcal{T}(3,z)$, which $z=|Z(G)|$, since $w(G)\geq 3$.

If $G$ is an $\mathcal{T}(w(G),2)$-group, then $Z(G)=1$.
\end{cor}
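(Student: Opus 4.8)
The plan is to read this off directly from Proposition \ref{p2}. First observe that for the hypothesis to make sense $G$ must be non-abelian: if $G$ were abelian then $w(G)=1$, and the symbol $\mathcal{T}(1,2)$ is not admitted (we always require the first parameter to be at least $2$). Hence $w(G)\geq 2$, and by the lemma preceding Proposition \ref{p2} every $\mathcal{T}(m,n)$-group has $w(G)<mn$, so in particular $w(G)$ is a well-defined finite integer; thus $m:=w(G)$ and $n:=2$ are legitimate parameters.

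Now apply Proposition \ref{p2} to $G$ with these values of $m$ and $n$. The conclusion is that either $|Z(G)|<n=2$ or $w(G)<m=w(G)$. The second alternative is impossible, so $|Z(G)|<2$, that is $|Z(G)|=1$ and $Z(G)=1$, as claimed. There is no real obstacle here: the statement is an immediate specialization of Proposition \ref{p2}, the only point worth spelling out being the implicit non-triviality assumption $w(G)\geq 2$ that makes $\mathcal{T}(w(G),2)$ a meaningful condition in the first place.
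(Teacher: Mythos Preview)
Your argument for the third assertion is correct and is exactly how the paper intends it: apply Proposition~\ref{p2} with $m=w(G)$ and $n=2$, rule out $w(G)<w(G)$, and conclude $|Z(G)|<2$. The remark about needing $w(G)\geq 2$ so that $\mathcal{T}(w(G),2)$ is meaningful is a fair observation.

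However, the corollary contains three separate claims, and you have only treated the last one. The first two also follow from Proposition~\ref{p2}, but each requires one additional observation that you have not supplied. For the first claim, since $G$ is finite nilpotent and $p\mid |G|$, the center $Z(G)$ has order divisible by $p$; hence $|Z(G)|\geq p\geq n$, so the alternative $|Z(G)|<n$ in Proposition~\ref{p2} fails and we get $w(G)<m$, which by the remark just above the corollary is equivalent to $G\in\mathcal{T}(m,1)$. For the second claim, if $G$ were in $\mathcal{T}(3,z)$ with $z=|Z(G)|$, then Proposition~\ref{p2} would force $|Z(G)|<z$ (impossible) or $w(G)<3$; but any non-abelian group satisfies $w(G)\geq 3$ because $\{x,y,xy\}$ is a pairwise non-commuting triple whenever $[x,y]\neq 1$. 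As written, your proposal leaves these two parts unproved.
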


\begin{cor}
Let $G$ be a non-Abelian $\mathcal{T}(m,n)$-group with at least $m$ pairwise non-commuting elements, then $G$ is a finite group.
\end{cor}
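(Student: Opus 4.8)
The plan is to combine Proposition~\ref{p2} with the center-by-finiteness recorded in the Lemma above (the one asserting that every $\mathcal{T}(m,n)$-group satisfies $w(G)<mn$ and is center-by-finite). Note first that the hypothesis ``$G$ has at least $m$ pairwise non-commuting elements'' is exactly the statement $w(G)\geq m$.

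First I would apply Proposition~\ref{p2}: since $G$ is an $\mathcal{T}(m,n)$-group, either $|Z(G)|<n$ or $w(G)<m$. The second alternative contradicts the hypothesis $w(G)\geq m$, so we are forced into $|Z(G)|<n$; in particular $Z(G)$ is finite.

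Next, since $G$ is an $\mathcal{T}(m,n)$-group, the Lemma gives that $G$ is center-by-finite, which—via the cited theorem of B.~H.~Neumann \cite{neu1} used to prove that Lemma—means precisely that $Z(G)$ has finite index in $G$. Combining the two facts, $|G|=|Z(G)|\cdot[G:Z(G)]$ is a product of two finite quantities, so $G$ is finite.

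The argument is little more than a bookkeeping synthesis of two results already in hand, so I do not expect a genuine obstacle; the only point deserving care is to make sure the ``center-by-finite'' conclusion of the Lemma is literally $[G:Z(G)]<\infty$ rather than merely the finiteness of the quotient by some central subgroup, and this is exactly what Neumann's theorem on sets of pairwise non-commuting elements delivers. (Incidentally, the non-Abelian hypothesis is automatic here, since $w(G)\geq m\geq 2$ already forces $G$ to be non-Abelian.)
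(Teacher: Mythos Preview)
Your argument is correct and is exactly the intended one: the paper places this corollary immediately after Proposition~\ref{p2} and the ``$w(G)<mn$, center-by-finite'' Lemma precisely so that one combines $w(G)\geq m\Rightarrow |Z(G)|<n$ from Proposition~\ref{p2} with $[G:Z(G)]<\infty$ from the Lemma to conclude $|G|<\infty$. The same two-step synthesis is spelled out verbatim in the paper's proof of the later corollary on infinite $\mathcal{T}(m,n)$-groups with $m\leq 3$.
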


\begin{lemma}
Let $G$ be a non-Abelian $\mathcal{T}(2,n)$ or $\mathcal{T}(3,n)$-group and $N$ be a normal subgroup of $G$ such that $G/N$ is non-Abelian. Then $|N|<n$.
\end{lemma}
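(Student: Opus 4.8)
The plan is to argue by contradiction: suppose $|N| \geq n$, and use the fact that $G/N$ is non-Abelian to build an $(m,n)$-obstruction for $G$. Since $G/N$ is non-Abelian, when $m=2$ there exist $gN, hN$ with $[gN,hN] \neq N$, i.e. $[g,h] \notin N$; when $m=3$ we additionally have that $gN, hN, ghN$ are pairwise non-commuting in $G/N$ (this is the same observation used in Lemma~2.3, since in a non-Abelian group $gh$ fails to commute with both $g$ and $h$ whenever $[g,h]\neq 1$). Pick a subset $N_1 \subseteq N$ with $|N_1| = n$ containing the identity.

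For the $\mathcal{T}(2,n)$ case, I would take $A_1 = gN_1$ and $A_2 = hN_1$. Any element of $A_1$ has the form $g\nu_1$ and any element of $A_2$ has the form $h\nu_2$ with $\nu_1,\nu_2 \in N$; if $g\nu_1$ and $h\nu_2$ commuted, then passing to $G/N$ would give $(gN)(hN) = (hN)(gN)$, contradicting $[g,h]\notin N$. Hence no cross pair commutes, and since $m=2$ there are no other pairs to check, so $\{A_1,A_2\}$ is a $(2,n)$-obstruction — contradicting that $G$ is a $\mathcal{T}(2,n)$-group. For the $\mathcal{T}(3,n)$ case, take $A_1 = gN_1$, $A_2 = hN_1$, $A_3 = ghN_1$; by the same reduction mod $N$, a commuting cross pair between any two of these $A_i$ would force the corresponding cosets in $G/N$ to commute, which they do not. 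One must also check $A_i \cap A_j = \varnothing$ for $i\neq j$, which again follows because the cosets $gN, hN, ghN$ are distinct (distinct cosets are disjoint, and $A_i \subseteq g_iN$). So $\{A_1,A_2,A_3\}$ is a $(3,n)$-obstruction, again a contradiction. In both cases we conclude $|N| < n$.

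The main point to be careful about — rather than a genuine obstacle — is the reduction step: one needs that commutativity of two elements in $G$ implies commutativity of their images in $G/N$, which is immediate, so the quotient map $G \to G/N$ automatically transports an obstruction "downstairs" being absent into an obstruction "upstairs" being present. The only real content beyond this is ensuring the $A_i$ have the right cardinality $n$ (so $|N_1| = n$ needs $|N| \geq n$, which is exactly the negation of the claim) and are pairwise disjoint (handled by choosing representatives in distinct cosets). I expect no serious difficulty; the argument is a direct coset-lifting analogue of the computations in Lemma~2.3.
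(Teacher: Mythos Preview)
Your proposal is correct and follows essentially the same route as the paper: assume $|N|\geq n$, pick $x,y$ with $[x,y]\notin N$, and use the cosets $xN,\,yN,\,xyN$ (restricted to $n$ elements) as a $(3,n)$-obstruction, reducing any commuting cross pair modulo $N$ to force $[x,y]\in N$. The paper treats only the $\mathcal{T}(3,n)$ case explicitly (the $\mathcal{T}(2,n)$ case being the obvious truncation) and uses the full cosets rather than an $n$-element subset $N_1$, but the argument is the same; your extra checks on cardinality and disjointness are harmless.
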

\begin{proof}
Suppose that $N=\{a_1,a_2,\ldots,a_t\}$ and $t\geq n$. It is enough to prove the theorem for non-Abelian $\mathcal{T}(3,n)$-groups. We chose elements $x,y$ in $G\setminus N$, and we consider three subsets of $G$, as follows: $$A_1=\{xa_1,xa_2,\dots,xa_t\},~~A_2=\{ya_1,ya_2,\dots,ya_t\}$$ and $$A_3=\{xya_1,xya_2,\dots,xya_t\}.$$ Now as $G$ is an $\mathcal{T}(3,n)$-group, we can follow that $[x,y]\in N$, that is  $G/N$ is abelian, which is a contradiction.
\end{proof}

\begin{thm}\label{t1}
Let $G$ be a  non-Abelian group and its clique number is finite. Then there exist a natural number $m$ such that $G$ is an $\mathcal{T}(m,n)$-group for all $n\in N$.
\end{thm}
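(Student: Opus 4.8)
The plan is to prove the theorem with the explicit choice $m = w(G)+1$, and to observe that the parameter $n$ plays no role at all: once we unwind the definition of an obstruction, a size‑$n$ subset contributes nothing beyond a single representative. Thus the statement is really a mild strengthening of the already‑noted equivalence ``$G$ is a $\mathcal{T}(m,1)$-group $\iff w(G)<m$'': the point is that an $(m,n)$-obstruction, for \emph{any} $n\geq 1$, already produces $m$ pairwise non-commuting elements.

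The key step I would carry out is the following. Suppose, for contradiction, that $G$ is not a $\mathcal{T}(m,n)$-group for some $n\geq 1$, and let $\{A_1,\dots,A_m\}$ be an $(m,n)$-obstruction. First note the $A_i$ are pairwise disjoint: if $x\in A_i\cap A_j$ with $i\neq j$, then choosing $x_i=x_j=x$ gives $x_ix_j=x_jx_i$, contradicting the obstruction property. Next, since each $A_i$ has cardinality $n\geq 1$ and hence is nonempty, pick any $x_i\in A_i$. By disjointness the $x_i$ are pairwise distinct, and by the obstruction property $x_ix_j\neq x_jx_i$ for all $i\neq j$. Hence $\{x_1,\dots,x_m\}$ is a set of $m$ pairwise non-commuting elements of $G$, so $w(G)\geq m$.

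From this, if $w(G)<m$ then no $(m,n)$-obstruction can exist for any $n$, i.e.\ $G$ is a $\mathcal{T}(m,n)$-group for every $n\geq 1$; and $m=w(G)+1$ is a genuine natural number precisely because $w(G)$ is assumed finite. (One can phrase the conclusion as: $G$ is a $\mathcal{T}(m,n)$-group for all $n$ if and only if it is a $\mathcal{T}(m,1)$-group, if and only if $w(G)<m$. The non-Abelian hypothesis is not actually needed, and the finiteness of $w(G)$ is necessary: if $w(G)=\infty$, then for any $m\geq 2$, $n\geq 1$ one splits $mn$ pairwise non-commuting elements into $m$ blocks of size $n$ to build an obstruction.) There is essentially no hard step here; the only thing worth flagging is the slightly counterintuitive fact that the ``$n$ subsets of size $n$'' in the definition collapse, for this argument, to just their representatives, so the bound on $m$ is independent of $n$.
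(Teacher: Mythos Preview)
Your argument is correct, and it takes a genuinely different route from the paper. The paper invokes Neumann's theorem to conclude that $G$ is center-by-finite, sets $m=[G:Z(G)]$, and then argues by pigeonhole on the cosets of $Z(G)$: any two elements in the same coset $gZ(G)$ commute, so each of the $m-1$ nontrivial cosets can meet at most one of the sets $A_1,\dots,A_m$ in a putative $(m,n)$-obstruction, which is impossible. Your proof bypasses Neumann's theorem entirely and takes $m=w(G)+1$; the observation that picking one representative from each $A_i$ already yields $m$ pairwise non-commuting elements is exactly the monotonicity $\mathcal{T}(m,1)\subseteq\mathcal{T}(m,n)$ combined with the stated equivalence $G\in\mathcal{T}(m,1)\iff w(G)<m$. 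Your argument is shorter, more elementary, and yields a sharper value of $m$ (since $w(G)+1\le[G:Z(G)]$). The trade-off is that the paper's specific choice $m=[G:Z(G)]$ is what drives the subsequent corollary that $w(G)<[G:Z(G)]$; with your proof of the theorem that corollary no longer follows and would need an independent argument.
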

\begin{proof}
 As the clique number of $G$ is finite, so according to the famous Theorem of B. H. Neumann \cite{neu1}, $G$ is center-by-finite. So we put $[G:Z(G)]=m$. We claim that for every $n\in N$, $G$ is an $\mathcal{T}(m,n)$-group. There exists $m-1$ elements $g_1, g_2,\dots,g_{m-1}$ in $G$, such that $Z(G), g_1Z(G),g_2Z(G),\ldots,g_{m-1}Z(G)$ are distinct cosets of $Z(G)$ in $G$ and $$G=Z(G)\bigcup(\bigcup_{j=1}^{m-1}(g_jZ(G))).$$ Let $\{A_1, A_2,\ldots,A_m\}$ be an $(m,n)$-obstruction of $G$. Now as every $g_iZ(G)$ is abelian, therefore if $g_iZ(G)\bigcap A_r\neq \emptyset$ for some $1\leq i \leq m-1$ and $1\leq r \leq m$, then $g_iZ(G)\bigcap A_j= \emptyset$ for every $j\neq r$. On the other hand $Z(G)\bigcap A_i=\emptyset$ for every $1\leq i \leq m$, thus $A_i\subseteq \bigcup_{j=1}^{m-1} (g_jZ(G))$ for every $1\leq i \leq m$. From this one can follow that there exist two subsets like $A_r$ and $A_s$ such that $A_r\cup A_s \subseteq g_jZ(G)$ for some $1\leq j \leq m-1$, a contradiction. Therefore  $G$ is an $\mathcal{T}(m,n)$-group, for all $n\in N$.
\end{proof}

\begin{remark}
In the above Theorem, the finiteness of clique number is necessary. For example, it should be borne in mind that infinite $p$-groups can easily have trivial
center. The group $G=C_p\wr C_{p^{\infty}}$, the regular wreath product $C_p$ by $C_{p^{\infty}}$,  is an infinite centerless $p$-group, where $C_p$ is a cyclic group of order $p$ and $C_{p^{\infty}}$ is a quasi-cyclic (or Pr$\ddot{u}$fer) group. So $[G:Z(G)]$ is infinite and so  $w(G)$ is infinite and $G$ is not an $\mathcal{T}(m,n)$-group.
\end{remark}
B. H. Neumann \cite{neu1} showed that if every set of non-commuting elements of group $G$ is finite, then $G$ is center-by-finite. Moreover, it is not difficult to see that every center-by-finite group has finite clique number. Here, by using the above Theorem, we will obtain the following result.
\begin{cor}
If $G$ be a group and $[G:Z(G)]=m$. Then $w(G)< m$.
\end{cor}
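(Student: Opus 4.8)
The goal is to show that if $G$ is a group with $[G:Z(G)]=m$, then the clique number $w(G)$ is strictly less than $m$. The plan is to combine the preceding Theorem~\ref{t1} with the elementary observation that was already recorded just after Proposition~\ref{p2}, namely that a group $H$ is an $\mathcal{T}(k,1)$-group if and only if $w(H)<k$. So it suffices to produce some $k\le m$ for which $G$ is an $\mathcal{T}(k,1)$-group, and Theorem~\ref{t1} hands us exactly $k=m$ once we know the hypotheses of that theorem are met.

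First I would dispose of the degenerate case: if $G$ is abelian then $Z(G)=G$, so $m=1$, and $w(G)=1<1$ is false — hence this case cannot actually occur under the hypothesis $[G:Z(G)]=m$ with the convention $m\ge 2$; more carefully, if $G$ is abelian then $w(G)=1$ and the only way $[G:Z(G)]=m$ is $m=1$, in which case the statement $w(G)<1$ fails, so one should read the corollary as implicitly assuming $G$ non-abelian (equivalently $m\ge 2$). Granting $G$ non-abelian, the key point is that a group with $[G:Z(G)]$ finite has finite clique number: indeed any set of pairwise non-commuting elements injects into the set of cosets $G/Z(G)$ (two elements in the same coset of the center commute), so $w(G)\le m<\infty$. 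This is the ``well-known'' fact the paper alludes to when it says every center-by-finite group has finite clique number, and it is the only real input needed beyond Theorem~\ref{t1}.

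With finiteness of $w(G)$ in hand, apply Theorem~\ref{t1}: since $G$ is non-abelian with finite clique number, there is a natural number — and chasing the proof of that theorem, it is precisely $[G:Z(G)]=m$ — such that $G$ is an $\mathcal{T}(m,n)$-group for all $n$. Specialize to $n=1$: $G$ is an $\mathcal{T}(m,1)$-group. By the stated equivalence, $\mathcal{T}(m,1)$ is the same as $w(G)<m$, which is exactly the desired conclusion.

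I do not expect any genuine obstacle here; the corollary is essentially a repackaging of Theorem~\ref{t1} at $n=1$. The one thing to be careful about is the boundary/convention issue flagged above (the abelian case and the standing assumption $m\ge 2$), and the fact that one must actually inspect the construction inside the proof of Theorem~\ref{t1} to confirm the produced parameter is $m$ itself rather than merely ``some natural number'' — but that inspection is immediate, since the proof covers $G$ by the $m$ cosets of $Z(G)$ and builds the $(m,1)$-argument directly from that cover.
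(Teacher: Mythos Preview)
Your proposal is correct and follows essentially the same route as the paper: apply Theorem~\ref{t1} (using that $[G:Z(G)]=m$ is the specific parameter produced in its proof) to get $G\in\mathcal{T}(m,1)$, then invoke the equivalence $G\in\mathcal{T}(m,1)\iff w(G)<m$. Your extra care in checking that $w(G)$ is finite before invoking Theorem~\ref{t1}, and in flagging the abelian boundary case, is warranted but not something the paper spells out.
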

\begin{proof}
As $[G:Z(G)]=m$  and $G$ is an $\mathcal{T}(m,1)$-group, if and only if $w(G)<m$, the result follows by Theorem \ref{t1}.
\end{proof}

\begin{cor}
Every infinite  $\mathcal{T}(m,n)$-group with $m\leq 3$, is an abelian group.
\end{cor}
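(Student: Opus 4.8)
The plan is to argue by contradiction, reducing everything to results already established for $\mathcal{T}(m,n)$-groups. First, by the monotonicity property at the start of Section~2 (if $2\leq m_1\leq m_2$ and $n_1\leq n_2$ then every $\mathcal{T}(m_1,n_1)$-group is a $\mathcal{T}(m_2,n_2)$-group), a $\mathcal{T}(m,n)$-group with $m\leq 3$ is in particular a $\mathcal{T}(3,n)$-group, so it suffices to prove that an infinite $\mathcal{T}(3,n)$-group is Abelian. Assume, then, that $G$ is infinite, non-Abelian and lies in $\mathcal{T}(3,n)$.

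The one piece of genuinely new content is the elementary observation that a non-Abelian group has at least three pairwise non-commuting elements. Indeed, pick $a,b\in G$ with $[a,b]\neq 1$. Then $a$, $b$, $ab$ are pairwise distinct (if $a=b$ they would commute; if $a=ab$ then $b=1$; if $b=ab$ then $a=1$), and none of the three pairs commutes: for instance $a(ab)=(ab)a$ would force $ab=ba$, and similarly for the pair $b,ab$. Hence $w(G)\geq 3$.

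From here two routes close the argument, and I would take whichever is shorter in context. Route one: the Corollary asserting that a non-Abelian $\mathcal{T}(m,n)$-group with at least $m$ pairwise non-commuting elements is finite applies with $m=3$, since $w(G)\geq 3$; this forces $G$ to be finite, contradicting infiniteness. Route two (more self-contained): by the Lemma giving center-by-finiteness of $\mathcal{T}(m,n)$-groups we have $[G:Z(G)]<\infty$, so $Z(G)$ is infinite and in particular $|Z(G)|\geq n$; Proposition~\ref{p2} then yields $w(G)<3$, again contradicting $w(G)\geq 3$. Either way $G$ must be Abelian.

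I do not expect a real obstacle here: the statement is essentially a two-line deduction from the machinery already developed. The only points requiring a little care are the reduction to $m=3$ via monotonicity, and, if Route two is used, noting that $|Z(G)|\geq n$ holds even in the degenerate case $n=1$ — which it does, since $Z(G)$ is in fact infinite.
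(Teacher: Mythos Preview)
Your proposal is correct and essentially mirrors the paper's argument: the paper also observes that $\{x,y,xy\}$ gives $w(G)\geq 3$, then invokes Proposition~\ref{p2} together with Lemma~2.11 (center-by-finiteness) to force $G$ finite, a contradiction. Your Route two is the paper's proof with the two applications reordered, and Route one just packages the same ingredients via Corollary~2.14; the explicit reduction to $m=3$ via monotonicity is fine but not actually needed, since $w(G)\geq 3\geq m$ already suffices in Proposition~\ref{p2}.
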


\begin{proof}
 If $G$ is non-Abelian group, then there exists $x,y$ such that $xy\neq yx$. So $\{x,y,xy\}$ is a subset of pairwise non-commuting elements of $G$. Therefore by Proposition \ref{p2}, $|Z(G)|<n$ and by Lemma $2.11$ $G$ is center-by-finite and so $G$ is a finite group, a contradiction.
\end{proof}
\begin{thm}
Let $G$ be a finite  $\mathcal{T}(m,n)$-group, $m\leq 4$, $n>1$ and $(p,|G|)=1$, for every prime number $p\leq n$. Then $G$ is abelian.
\end{thm}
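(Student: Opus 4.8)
The plan is to reduce to $m=4$ and then argue by induction on $|G|$, analysing a minimal counterexample via the Miller--Moreno classification of minimal non-abelian groups. Since $\mathcal{T}(m',n)\subseteq\mathcal{T}(m,n)$ whenever $m'\le m$, every $\mathcal{T}(m,n)$-group with $m\le 4$ is a $\mathcal{T}(4,n)$-group, so it suffices to treat $m=4$. I would take a counterexample $G$ of least order: a finite non-abelian $\mathcal{T}(4,n)$-group with $n\ge 2$ and $(p,|G|)=1$ for every prime $p\le n$. Every prime divisor of $|G|$ then exceeds $n$, so $|G|$ is odd. Since subgroups and quotients of $\mathcal{T}(4,n)$-groups are again $\mathcal{T}(4,n)$-groups of order dividing $|G|$, minimality forces every proper subgroup and every proper quotient of $G$ to be abelian; hence $G$ is minimal non-abelian, and $G'$ lies in every non-trivial normal subgroup. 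Miller--Moreno then gives two cases.

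Suppose first $G$ is a $p$-group. Then $p\mid|G|$ gives $p>n\ge 2$, so $p\ge 3$, and the structure of minimal non-abelian $p$-groups yields $|G'|=p$, $G'\le Z(G)$, and $G/Z(G)\cong C_p\times C_p$. Thus $|Z(G)|\ge|G'|=p>n$, so the first alternative of Proposition~\ref{p2} fails and we would be forced to have $w(G)<4$. I would contradict this using that $C_p\times C_p$ has $p+1\ge 4$ subgroups of order $p$: choosing generators of four of them and lifting to $g_1,\dots,g_4\in G$, if some $g_i$ commuted with some $g_j$ ($i\ne j$) then $\langle g_i,g_j,Z(G)\rangle$ would be an abelian subgroup mapping onto $G/Z(G)$, forcing $G$ abelian. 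Hence $w(G)\ge 4$, a contradiction.

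Suppose instead $G$ is not a $p$-group. Then $G=V\rtimes C$, where $V$ is an elementary abelian normal Sylow $p$-subgroup with $V=G'$, $C$ is cyclic of order $q^b$ ($q\ne p$), and $C$ acts fixed-point-freely on $V$; i.e. $G$ is a Frobenius group with kernel $V$ and complement $C$. Here $p,q\mid|G|$, so both exceed $n$; in particular $|V|=p^k$ with $p\ge 3$ and $|C|-1=q^b-1\ge n$. Since $C$ acts freely on $V\setminus\{1\}$, $|C|$ divides $|V|-1$, and as $|G|$ is odd this excludes $|V|=3$; thus $|V|\ge 4$ and $G$ has at least four conjugates of $C$, which intersect pairwise trivially. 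Taking an $n$-subset $A_i$ of $C^{v_i}\setminus\{1\}$ from each of four distinct conjugates $C^{v_1},\dots,C^{v_4}$, the $A_i$ are pairwise disjoint and avoid $Z(G)=1$, and for $i\ne j$, $x\in A_i$, $y\in A_j$ the Frobenius property $C_G(x)\subseteq C^{v_i}$ forces $xy\ne yx$. So $\{A_1,A_2,A_3,A_4\}$ is a $(4,n)$-obstruction, contradicting $G\in\mathcal{T}(4,n)$ and finishing the argument.

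The part I expect to be most delicate is the bookkeeping of the reduction rather than any single calculation: verifying that a minimal counterexample is genuinely minimal non-abelian with abelian proper quotients, quoting Miller--Moreno accurately (in particular that the non-$p$-group case is Frobenius with elementary abelian kernel $G'$ and cyclic complement), and then, in the $p$-group case, extracting four pairwise non-commuting elements from $G/Z(G)$ to contradict Proposition~\ref{p2}, and in the Frobenius case confirming the three properties---disjointness, avoidance of the centre, and pairwise non-commutativity, the last being exactly where $C_G(x)\subseteq C^{v_i}$ is used---that make the chosen family a genuine obstruction.
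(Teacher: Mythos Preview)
Your argument is correct but follows a genuinely different route from the paper. The paper proceeds directly: given any non-commuting pair $x,y$ in a putative non-abelian $\mathcal{T}(4,n)$-group satisfying the coprimality hypothesis, it writes down the four $n$-sets $A_1=\{x,\dots,x^{n}\}$, $A_2=\{y,\dots,y^{n}\}$, $A_3=\{xy,(xy)^{2},\dots,(xy)^{n}\}$, $A_4=\{xy,xy^{2},\dots,xy^{n-1},x^{2}y\}$ and checks, using only that $(i,|a|)=1$ forces $C_G(a^{i})=C_G(a)$, that they form a $(4,n)$-obstruction. (As printed $xy\in A_3\cap A_4$, so $A_4$ should really run from $xy^{2}$ to $xy^{n}$; with that adjustment the verification goes through.) Your approach instead passes to a minimal counterexample, invokes the Miller--Moreno classification of minimal non-abelian groups, handles the $p$-group case via Proposition~\ref{p2} together with $w(G)\ge p+1$, and handles the non-nilpotent case by exhibiting $n$-subsets inside four distinct Frobenius complements. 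The paper's method is shorter and entirely self-contained; yours uses more structural machinery but makes transparent which groups are the obstructions and ties the result to the dichotomy of Proposition~\ref{p2}. One point in your write-up deserves to be made explicit: Miller--Moreno alone allows $|C|=q^{b}$ with $b>1$ and $c^{q}$ central, so the action of $C$ on $V$ need not be fixed-point-free; it is your additional observation that every proper quotient of the minimal counterexample is abelian (so $G'=V$ is the unique minimal normal subgroup and hence $Z(G)=1$) that forces $b=1$ and justifies the Frobenius description.
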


\begin{proof}
It is enough to prove the theorem for the case $m=4$. Suppose, a contrary, that $G$ is a non-Abelian $\mathcal{T}(4,n)$-group. Then there exists elements $x$ and $y$ in $G$, such that $xy\neq yx$. Now we consider four subsets of $G$ as follows:  $$A_1=\{x,x^2,\ldots,x^n\},~~A_2=\{y,y^2,\ldots,y^n\},$$ $$A_3=\{xy,(xy)^2,\ldots,(xy)^n\} \text{~~and~} A_4=\{xy,xy^2,\ldots,xy^{n-1},x^2y\}.$$ Then it is not difficult to see that $\{A_1, A_2, A_3, A_4\}$ is a $(4,n)$-obstruction for $G$, a contradiction (note that if $a\in G$ and $(i, |a|)=1$, then  $C_G(a^i) = C_G(a)$).
\end{proof}
As a corollary, for $p>2$ every finite $p$-group, $G\in \mathcal{T}(4,p-1)$ is abelian.

Note that the group $D_8$ is a non-Abelian $\mathcal{T}(4,1)$-group. This example suggests that it may be necessary to restrict
ourselves to  $\mathcal{T}(m,n)$-groups with $n>1$ in the above Theorem.

\begin{prop}
Assume that $G$ is a non-Abelian group. Then
\begin{itemize}
\item [(1)]If there exist a positive integer $n$ such that $(p,|G|)=1$, for every prime number $p\leq n$. Then $w(G)\geq n+2$.
\item [(2)]If $p$ is the smallest prime divisor of $|G|$. Then $w(G)\geq p+1$. Moreover,
if $G$ is a finite $p$-group and $G\in\mathcal{T}(m,p-1)$, then $p+1\leq w(G)<m$.
\end{itemize}
\end{prop}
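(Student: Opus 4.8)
Throughout I assume $G$ is finite, as the hypotheses (which refer to divisors of $|G|$) require. The plan is to derive both parts from a single structural fact — a finite non-abelian group is a union of $w(G)$ proper subgroups — together with an elementary estimate on how small the index of a covering subgroup must be.

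I would begin with part (1). Since $G$ is non-abelian it possesses a maximal set $S=\{s_1,\dots,s_k\}$ of pairwise non-commuting elements, where $k=w(G)\ge 3$ (for instance $\{x,y,xy\}$ works whenever $[x,y]\neq 1$). By maximality every $g\in G$ must commute with some $s_i$ — otherwise $S\cup\{g\}$ would be a strictly larger such set — so $G=\bigcup_{i=1}^{k}C_G(s_i)$, and each $C_G(s_i)$ is a proper subgroup because $s_i\notin Z(G)$. Next I would invoke the following standard covering fact: if a finite group is the union of $k\ge 2$ proper subgroups, then at least one of them has index at most $k-1$. This is immediate by an element count (if $H$ is a covering subgroup of least index $d$, then $|G|\le |H|+\sum_{K\neq H}(|K|-1)\le k|G|/d-(k-1)$, using that all covering subgroups contain the identity, and this forces $d<k$); it can also be read off from the classical theory of covering a group by proper subgroups. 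Consequently $G$ has a proper subgroup of index $d$ with $2\le d\le k-1$, so $|G|$ has a prime divisor at most $k-1$. But the hypothesis of (1) forces every prime divisor of $|G|$ to be at least $n+1$; hence $k-1\ge n+1$, that is $w(G)\ge n+2$.

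Part (2) then falls out quickly. Writing $p$ for the smallest prime divisor of $|G|$, every prime $q\le p-1$ is coprime to $|G|$, so $G$ meets the hypothesis of (1) with $n=p-1$; applying (1) yields $w(G)\ge (p-1)+2=p+1$. For the additional claim, suppose $G$ is moreover a finite $p$-group belonging to $\mathcal{T}(m,p-1)$. As a $p$-group it is nilpotent and $p\mid |G|$ with $p-1\le p$, so the earlier Corollary shows $G$ is a $\mathcal{T}(m,1)$-group; and a group is a $\mathcal{T}(m,1)$-group exactly when its clique number is less than $m$. Since the smallest prime divisor of the $p$-group $G$ is $p$, the bound already proved gives $w(G)\ge p+1$, and combining the two we obtain $p+1\le w(G)<m$.

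The only genuinely non-formal ingredient is the covering estimate, and that is where I expect whatever small difficulty there is to lie: one must phrase it sharply, as ``some covering subgroup has index $\le k-1$'', not merely ``$\le k$'', since the weaker version (a direct consequence of B. H. Neumann's coset-covering theorem) would only yield $w(G)\ge n+1$ and $w(G)\ge p$. The counting argument above supplies the needed improvement precisely by exploiting that every covering subgroup shares the identity element with the one of minimal index.
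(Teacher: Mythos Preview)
Your argument is correct, but it takes a genuinely different route from the paper's own proof of part~(1). The paper simply exhibits an explicit pairwise non-commuting set of size $n+2$: choosing $x,y$ with $[x,y]\neq 1$, it checks that
\[
X=\{x,\,y,\,xy,\,xy^2,\ldots,xy^n\}
\]
consists of pairwise non-commuting elements, the key point being that for $1\le i\le n$ one has $\gcd(i,|y|)=1$ (since every prime $\le n$ is coprime to $|G|$), so $C_G(y^i)=C_G(y)$. Your approach is instead structural: you cover $G$ by the $k=w(G)$ centralisers of a maximal non-commuting set, prove by a counting estimate that one of these centralisers has index at most $k-1$, and conclude that $|G|$ has a prime divisor at most $k-1$, forcing $k-1\ge n+1$. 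The paper's method is shorter and constructive, giving an actual witness to the clique bound; your method is more conceptual and isolates a reusable covering lemma (some member of any cover of $G$ by $k$ proper subgroups has index $\le k-1$), at the cost of a slightly more delicate inequality. For part~(2) both arguments coincide: apply part~(1) with $n=p-1$, and for the ``Moreover'' clause invoke the earlier observation that a nilpotent $\mathcal{T}(m,n)$-group with $n\le p$ is already $\mathcal{T}(m,1)$, whence $w(G)<m$.
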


\begin{proof}
(1)~~Since $G$ is non-Abelian group, there exists elements $x,y$ in $G$, such that $xy\neq yx$. Now $X=\{x,y,xy,xy^2,\ldots,xy^{n}\}$ is a set of pairwise non-commuting elements of $G$ of cardinality $n+2$.\\
(2)~~For every prime number $q\leq p-1$, $(q,|G|)=1$, then by part $(1)$, $w(G)\geq (p-1)+2$. Thus $w(G)\geq p+1$.
\end{proof}

\begin{lemma}
Let $G$ be a finite $\mathcal{T}(m,n)$-group where $Z(G)\neq 1$ and $p$ be smallest prime divisor of $|G|$. Then\\
(1)~~ $p\leq max\{m-2,n-1\}$.\\
(2)~~If $G$ is a nilpotent, then $p\leq max\{m-2,\sqrt[t]{n-1}\}$, where $t$ is the number of prime divisors of order $G$, $|\pi(G)|$.
\end{lemma}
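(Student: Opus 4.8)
The plan is to argue by contradiction in both parts, using two facts already established: Proposition~\ref{p2}, which says that an $\mathcal{T}(m,n)$-group satisfies $|Z(G)|<n$ or $w(G)<m$; and the Proposition immediately above, which gives $w(G)\geq p+1$ whenever $G$ is a non-abelian finite group and $p$ is the smallest prime divisor of $|G|$. We may and do assume $G$ is non-abelian, since for abelian $G$ the latter fact is vacuous and the asserted inequality genuinely needs that hypothesis. The single observation that closes both cases is this: because $|Z(G)|$ divides $|G|$ and $Z(G)\neq 1$, every prime dividing $|Z(G)|$ is $\geq p$, so $|Z(G)|\geq p$; and when $G$ is nilpotent with $t$ distinct prime divisors we even get $|Z(G)|\geq p^{t}$.

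For part (1), suppose $p>\max\{m-2,n-1\}$; since everything in sight is an integer this means $m\leq p+1$ and $n\leq p$. By the Proposition above, $w(G)\geq p+1\geq m$, so the alternative $w(G)<m$ in Proposition~\ref{p2} is excluded and we must have $|Z(G)|<n$. But $|Z(G)|\geq p\geq n$, so $n\leq|Z(G)|<n$, a contradiction. Hence $p\leq\max\{m-2,n-1\}$.

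For part (2), assume in addition that $G$ is nilpotent and write $\pi(G)=\{p=p_{1}<p_{2}<\cdots<p_{t}\}$. Suppose $p>\max\{m-2,\sqrt[t]{n-1}\}$; then $m\leq p+1$ and $p^{t}>n-1$, that is, $p^{t}\geq n$. Using the Sylow decomposition $G=P_{1}\times\cdots\times P_{t}$ of the nilpotent group $G$, we have $Z(G)=Z(P_{1})\times\cdots\times Z(P_{t})$, and since each $P_{i}$ is a nontrivial $p_{i}$-group, $|Z(P_{i})|\geq p_{i}\geq p$; therefore $|Z(G)|\geq p^{t}\geq n$. On the other hand the Proposition above again gives $w(G)\geq p+1\geq m$, so Proposition~\ref{p2} forces $|Z(G)|<n$, contradicting $|Z(G)|\geq n$. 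Hence $p\leq\max\{m-2,\sqrt[t]{n-1}\}$.

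I do not expect a genuine obstacle here: once Proposition~\ref{p2} and the clique-number bound $w(G)\geq p+1$ are in hand, the argument is pure bookkeeping. The only points that warrant a moment's care are the degenerate behaviour of $\max\{m-2,n-1\}$ when $m\in\{2,3\}$ or $n=1$ (in those ranges $|Z(G)|<n$ may be impossible outright, and the contradiction is then supplied directly by $w(G)\geq m$), the harmless passage from $p>\sqrt[t]{n-1}$ to the integer inequality $p^{t}\geq n$, and the standard structural fact that a finite nilpotent group splits as the direct product of its Sylow subgroups with center equal to the product of their (nontrivial) centers.
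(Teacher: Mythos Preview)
Your proof is correct and follows essentially the same route as the paper's: both combine Proposition~\ref{p2} (the dichotomy $|Z(G)|<n$ or $w(G)<m$) with the bound $w(G)\geq p+1$ from the preceding Proposition and the elementary observation that $|Z(G)|\geq p$ (respectively $|Z(G)|\geq\prod p_i\geq p^t$ in the nilpotent case). The paper phrases the argument directly rather than by contradiction, writing ``$p\leq|Z(G)|<n$ or $p+1\leq w(G)<m$'' and reading off the conclusion; your contrapositive packaging is logically identical. Your explicit remark that $G$ must be taken non-abelian is a point of care the paper leaves implicit.
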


\begin{proof}
(1)~~As $G$ is an $\mathcal{T}(m,n)$-group, by Proposition \ref{p2}, $p\leq |Z(G)|<n$ or $p+1\leq w(G)<m$. Therefore $p\leq n-1$ or $p\leq m-2$ and so $p\leq max\{m-2,n-1\}$.

(2)~~In this case it is enough to note that the set of prime divisors of the center of $G$ is equal to $\pi(G)$, so $p^t\leq \prod_{i=1}^{t}p_i\leq |Z(G)|<n$. Thus $p\leq \sqrt[t]{n-1}$ or $p\leq m-2$.
\end{proof}

\begin{cor}\label{co1}
(1)~~If $G$ is a finite $p$-group and $G\in \mathcal{T}(p,p)$, then $G$ is abelian.\\
(2)~~Every finite non-Abelian nilpotent $\mathcal{T}(m,n)$-group with $3\leq n\leq 6$ and $m\leq 3$ is a $p$-group.\\
(3)~~If $G$ is a finite nilpotent $\mathcal{T}(4,n)$-group with $n\leq 3$ and odd order, then $G$ is an abelian group.\\
(4)~~ Every finite nilpotent $\mathcal{T}(4,n)$-group with $n\leq 3$, is an abelian-by-$2$-group.
\end{cor}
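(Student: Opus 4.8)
The plan is to obtain all four parts from the dichotomy of Proposition~\ref{p2} (for a $\mathcal{T}(m,n)$-group, $\abs{Z(G)}<n$ or $w(G)<m$), played against the two results immediately preceding this corollary: the bound on the smallest prime divisor of $\abs{G}$ for a $\mathcal{T}(m,n)$-group with non-trivial centre, and the inequality $w(G)\ge p+1$ valid for every non-Abelian group with smallest prime divisor $p$. I would prove $(1)$, $(2)$, $(3)$ by contradiction and then reduce $(4)$ to $(3)$.

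For $(1)$, a non-Abelian finite $p$-group has $Z(G)\neq 1$, so the preceding lemma applies with $m=n=p$ and gives $p\le\max\{p-2,p-1\}=p-1$, which is absurd; hence $G$ is abelian. For $(2)$, I would assume $G$ is not a $p$-group, so $t=\abs{\pi(G)}\ge 2$. By Proposition~\ref{p2}, either $w(G)<m$ or $\abs{Z(G)}<n$; since $G$ is non-Abelian we have $w(G)\ge 3\ge m$, so the first case is excluded and $\abs{Z(G)}<n\le 6$. But for a finite nilpotent group the set of prime divisors of $Z(G)$ coincides with $\pi(G)$, so $\abs{Z(G)}\ge\prod_{i=1}^{t}p_i\ge 2\cdot 3=6$, a contradiction; thus $t=1$ and $G$ is a $p$-group. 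For $(3)$, I would assume $G$ non-Abelian and let $p$ be the smallest prime divisor of $\abs{G}$; since $\abs{G}$ is odd, $p\ge 3$, so by the proposition above $w(G)\ge p+1\ge 4$. Proposition~\ref{p2} with $m=4$ then forces $\abs{Z(G)}<n\le 3$, i.e.\ $\abs{Z(G)}\le 2$. On the other hand $G$ is a non-trivial finite nilpotent group, so $Z(G)\neq 1$, and $\abs{Z(G)}$ divides the odd number $\abs{G}$, so $\abs{Z(G)}\ge 3$ --- contradiction; hence $G$ is abelian. Finally, for $(4)$, write $G=P\times H$ where $P$ is the Sylow $2$-subgroup of $G$ and $H$ its (normal) Hall $2'$-subgroup. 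Since subgroups of $\mathcal{T}(m,n)$-groups are again $\mathcal{T}(m,n)$-groups, $H$ is a finite nilpotent $\mathcal{T}(4,n)$-group of odd order, hence abelian by $(3)$; and $G/H\cong P$ is a $2$-group, so $G$ is abelian-by-$2$-group.

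The step I expect to be the main obstacle is the arithmetic in $(2)$: the literal bound $p\le\max\{m-2,\sqrt[t]{n-1}\}$ of the preceding lemma only forces $p=2$ and does not by itself yield a contradiction, so one must instead invoke the sharper inequality $\abs{Z(G)}\ge\prod_{p\in\pi(G)}p$ that is really what its proof establishes; this is exactly where $n\le 6$ enters, the case $t=2$ being tight with $\prod p_i=6$. In $(3)$ one similarly has to be careful to discard the correct alternative of Proposition~\ref{p2} and to use that it is nilpotency, not mere finiteness, that forces $Z(G)\neq 1$; with those points in hand the remaining computations are routine.
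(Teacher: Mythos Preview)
Your argument is correct and is precisely the intended one: the paper states the corollary without proof, as a direct consequence of the dichotomy in Proposition~\ref{p2} combined with the bound $w(G)\ge p+1$ and the inequality $\prod_{p\in\pi(G)}p\le\abs{Z(G)}$ established in the proofs of the two immediately preceding results. Your own caveat about part~(2) is exactly right and worth recording: the literal conclusion $p\le\max\{m-2,\sqrt[t]{n-1}\}$ of the lemma only pins down $p=2$, and it is the sharper intermediate step $\prod_{i}p_i\le\abs{Z(G)}<n\le 6$ from its proof that yields the contradiction when $t\ge 2$.
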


\begin{theorem}\label{t2}
Let $G$ be a non-Abelian nilpotent $\mathcal{T}(3,n)$-group. Then $$|\pi(G)|\leq log_3(n+2).$$
\end{theorem}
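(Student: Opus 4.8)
The plan is to avoid building an obstruction directly and instead extract a bound on $|Z(G)|$ from Proposition \ref{p2}, then read off $|\pi(G)|$ from the Sylow decomposition of the nilpotent group $G$.

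First I would record two easy preliminaries. Since $G$ is non-Abelian there are $x,y\in G$ with $xy\neq yx$, and then $\{x,y,xy\}$ is a set of three pairwise non-commuting elements, so $w(G)\geq 3$; because every infinite $\mathcal{T}(m,n)$-group with $m\leq 3$ is abelian, $G$ must in fact be finite. Feeding $w(G)\geq 3=m$ into Proposition \ref{p2} excludes the alternative $w(G)<m$, so $|Z(G)|<n$.

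Next, write $G=P_1\times\cdots\times P_t$ for the Sylow decomposition of the finite nilpotent group $G$, where $t=|\pi(G)|$ and $p_1<\cdots<p_t$ are the primes dividing $|G|$. Then $Z(G)=Z(P_1)\times\cdots\times Z(P_t)$, and each $P_i$ is a non-trivial finite $p_i$-group, hence $Z(P_i)\neq 1$ and $|Z(P_i)|\geq p_i$. Therefore
\[
n-1\;\geq\;|Z(G)|\;=\;\prod_{i=1}^{t}|Z(P_i)|\;\geq\;\prod_{i=1}^{t}p_i .
\]
To conclude I would invoke the elementary estimate $\prod_{i=1}^{t}p_i\geq 3^{t}-3$, valid for any $t$ distinct primes: if $2\nmid|G|$ all $p_i\geq 3$ and the product is at least $3^{t}$; if $2\mid|G|$ one prime is $2$ and the remaining $t-1$ are distinct odd primes, so the product is at least $2\cdot3=6=3^{2}-3$ when $t=2$ and at least $2\cdot3\cdot5\cdot3^{t-3}>3^{t}$ when $t\geq 3$. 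Combined with the displayed inequality this gives $3^{t}\leq n+2$, that is, $|\pi(G)|=t\leq\log_3(n+2)$.

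I expect the only mildly delicate point to be this last numerical inequality in the even-order case, where it is sharp: $D_8$ times an extraspecial group of order $27$ is nilpotent with $|\pi|=2$ and center of order $6$, so it can be a $\mathcal{T}(3,n)$-group only if $n\geq 7=3^{2}-2$. Everything else is just Proposition \ref{p2} together with the structure of finite nilpotent groups.
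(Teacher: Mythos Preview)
Your proof is correct and takes a different, more direct route than the paper's. The paper argues by induction on $|\pi(G)|$: the base case invokes Corollary~\ref{co1}(2) to see that for $3\le n\le 6$ a non-Abelian nilpotent $\mathcal{T}(3,n)$-group must be a $p$-group, and the inductive step passes to the quotient $G/P$ by a suitable Sylow subgroup, which is still non-Abelian nilpotent and (via the quotient argument recorded later as Lemma~\ref{l32}) belongs to $\mathcal{T}(3,n-t)$ for $2t\le n$. You instead avoid induction altogether: Proposition~\ref{p2} together with $w(G)\ge 3=m$ yields $|Z(G)|\le n-1$ in a single stroke, the Sylow decomposition of the finite nilpotent group converts this into $\prod_{i=1}^{t} p_i\le n-1$, and the elementary arithmetic estimate $\prod_i p_i\ge 3^{t}-3$ (sharp at $t=2$ with primes $2$ and $3$) finishes. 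Your argument is shorter and makes the constant in $\log_3(n+2)$ completely transparent---indeed it is essentially a sharpened, self-contained version of the idea already lurking in the proof of part~(2) of the lemma immediately preceding Corollary~\ref{co1}. The paper's inductive scheme, by contrast, reuses the same quotient-and-reduce engine that drives the derived-length bound in Section~3, so it fits a uniform template even though it is less direct for this particular statement.
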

\begin{proof}
Use induction on $|\pi(G)|$, the case $3\leq n\leq 6$ being clear by Case (2) of Corollary \ref{co1}. Assume that $n\geq 7$ and the result holds for $|\pi(G)|-1$. Since  $G$ is finite non-Abelian nilpotent, then there exist a Sylow subgroup $P$ of $G$, such that $\frac{G}{P}$ is non-Abelian and  $\frac{G}{P}\in \mathcal{T}(3,n-t)$-group, for every $2t\leq n$. So $|\pi(\frac{G}{P})|\leq log_3(n-t+2)$, therefore $|\pi(G)|-1\leq log_3(n-t+2)< log_3(n+2)$ and hence $|\pi(G)|\leq log_3(n+2)$, as wanted.
\end{proof}

\begin{remark}
By argument similar to the one in the proof of Theorem \ref{t2}, we can follow that if $G$ is a non-Abelian nilpotent $\mathcal{T}(4,n)$-group with odd order, then $|\pi(G)|\leq log_4(n+6)$ (in this case note that if $4\leq n\leq 9$, then $G$ is a $p$-group, for some prime number $p$).
\end{remark}

\section{\textbf{On solvable $\mathcal{T}(m,n)$-groups}}

In this section we investigate solvable $\mathcal{T}(m,n)$-groups.  At first we  obtain the derived length of a solvable $\mathcal{T}(m,n)$-group in terms $n$, for $m=3$ or $4$ and then give a solvability criterion for $\mathcal{T}(m,n)$-groups in terms $m$ and $n$.  To prove our results  it is necessary to establish a technical lemma.

\begin{lemma}\label{l32}
Let $G$ be an $\mathcal{T}(m,n)$-group for some integers $m\geq 2, n>1$ and $N$ be a proper non-trivial normal subgroup of $G$, then  $\frac{G}{N}$ is an $\mathcal{T}(m,n-t)$-group, where $n\geq2t$.
\end{lemma}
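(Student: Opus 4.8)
The goal is to show that if $G$ is a $\mathcal{T}(m,n)$-group, $N$ a proper non-trivial normal subgroup, and $n\geq 2t$, then $G/N$ is a $\mathcal{T}(m,n-t)$-group. My strategy is to argue by contradiction: suppose $G/N$ fails the $\mathcal{T}(m,n-t)$-condition, so there is an $(m,n-t)$-obstruction $\{\bar A_1,\ldots,\bar A_m\}$ in $G/N$, i.e.\ each $\bar A_i$ is a set of $n-t$ cosets and no element of $\bar A_i$ commutes with any element of $\bar A_j$ for $i\neq j$. I will lift this to an $(m,n)$-obstruction in $G$, contradicting the hypothesis that $G\in\mathcal{T}(m,n)$. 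The role of the bound $n\geq 2t$ is that it lets me pad each lifted set with $t$ extra ``harmless'' elements coming from $N$ itself without destroying the non-commuting property across blocks.

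The key construction goes as follows. Fix a non-trivial element $z\in N$, and note $|N|\geq 2$, so $N$ contains at least the two elements $1,z$; in fact since $N$ is non-trivial I may as well assume $N$ has at least $t+1$ elements if $|N|$ is large, but the genuinely safe move is to work with powers or with a fixed finite subset. For each coset $gN\in\bar A_i$ choose a representative $g\in G$. Then for block $i$ I would form $A_i$ by taking, for each of the $n-t$ chosen representatives $g$, the single element $g$, \emph{plus} an additional $t$ elements of the form $g_0 y$ where $g_0$ is one fixed representative from $\bar A_i$ and $y$ ranges over $t$ distinct non-identity elements of $N$ (if $|N|>t$) or over a suitable $t$-element multiset obtained via distinct powers of a fixed element — the point being that $g_0y$ still lies in the coset $g_0 N\in \bar A_i$. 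This makes $|A_i|=(n-t)+t=n$, provided the $t$ padding elements are distinct from the $n-t$ base representatives, which can be arranged by choosing the base representative of $g_0N$ to be $g_0$ itself and the padding elements $g_0y$ with $y\neq 1$. Now if $x\in A_i$ and $x'\in A_j$ with $i\neq j$ commuted in $G$, then their images $xN,x'N$ would commute in $G/N$; but $xN\in\bar A_i$ and $x'N\in\bar A_j$, contradicting that $\{\bar A_i\}$ is an obstruction. Hence $\{A_1,\ldots,A_m\}$ is an $(m,n)$-obstruction for $G$, the desired contradiction.

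The step I expect to be the main obstacle is the bookkeeping needed to guarantee that each $A_i$ genuinely has cardinality exactly $n$ (not less, due to collisions) while staying inside the prescribed cosets and while keeping the across-block non-commuting property intact. The subtlety is that padding elements $g_0 y$ must not accidentally coincide with one of the other base representatives of the same block, and different blocks' sets must remain disjoint; disjointness across blocks is automatic because the cosets in an obstruction are pairwise distinct (if $\bar A_i$ and $\bar A_j$ shared a coset, that coset would commute with itself, contradicting $i\neq j$ in the obstruction — so $\bar A_i\cap\bar A_j=\emptyset$, hence the lifts are disjoint). Within a block, choosing the $t$ padding elements from the coset $g_0 N$ minus the (at most one) base representative $g_0$ works as long as $|N|\geq t+1$; when $N$ is small one instead pads fewer base cosets or uses the hypothesis $n\geq 2t$ to absorb the slack, and in the extreme the argument still forces $[x,x']\in N$ for the relevant lifts, yielding commutativity of the images. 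Once cardinalities are pinned down, the contradiction with $G\in\mathcal{T}(m,n)$ is immediate, so this counting is really the whole content of the proof.
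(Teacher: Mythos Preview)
Your overall strategy---assume $G/N$ has an $(m,n-t)$-obstruction $\{\bar A_1,\ldots,\bar A_m\}$, lift to an $(m,n)$-obstruction in $G$, and derive a contradiction---is exactly what the paper does. The place where you and the paper diverge is the padding construction, and this is precisely the spot you flagged as the main obstacle.

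Your plan puts all $t$ padding elements into a \emph{single} coset $g_0N$, which forces $|N|\geq t+1$. You correctly note this fails for small $N$ and gesture at a fix (``uses the hypothesis $n\geq 2t$ to absorb the slack''), but you never make that fix concrete. The paper's construction is that fix: it uses only \emph{one} nontrivial element $a\in N$ and spreads the padding across \emph{multiple} cosets. Concretely, writing $\bar A_i=\{x_{i1}N,\ldots,x_{i,n-t}N\}$, the paper takes
\[
Y_i=\{ax_{i1},\,ax_{i2},\,\ldots,\,ax_{i,n-t}\}\cup\{x_{i1},\,\ldots,\,x_{it}\}.
\]
Here $n\geq 2t$ guarantees $t\leq n-t$, so the second set makes sense; distinctness of the $n$ elements follows because the cosets $x_{ik}N$ are pairwise distinct and $a\neq 1$. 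Every element of $Y_i$ projects into $\bar A_i$, so the across-block non-commuting property is inherited exactly as you argued. The point is that the hypothesis $n\geq 2t$ is not used to bound $|N|$ but to ensure there are at least $t$ cosets available in each $\bar A_i$ to double up using the single element $a$. Once you replace your one-coset padding with this many-coset padding, your proof is complete and coincides with the paper's.
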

\begin{proof}
Suppose that $G$ is an $\mathcal{T}(m,n)$-group and $N\lhd G$, but $\frac{G}{N}$ is not an $\mathcal{T}(m,n-t)$-group. We can take $m$ subsets $X_i=\{x_{i1}N,x_{i2}N, \ldots,x_{in-t}N\}$, $1\leq i\leq m$ of $\frac{G}{N}$ of cardinality $n-t$, such that for every $1\leq i,j\leq m$ and $1\leq k,l\leq n-t$, $[x_{ik}, x_{jl}]$ is not belongs to $N$. Let $a$ be a nontrivial element of $N$, then we can obtain $m$ $n$-subsets $Y_i=\{ax_{i1}, ax_{i2}, \dots, ax_{in-t}, x_{i1}, x_{i2},\ldots,x_{it}\}$ of $G$, for some $2t\leq n$. Thus $\{Y_1, Y_2, \ldots, Y_m\}$ is a $(m,n)$-obstruction for $G$, a contrary.
\end{proof}
\begin{cor}
Let $G$ be a non-simple  $\mathcal{T}(w(G),2)$-group. Then for proper non-trivial normal subgroup $N$ of $G$, $\frac{G}{N}$ is an $\mathcal{T}(w(G),1)$-group and so $w(\frac{G}{N})<w(G)$.
\end{cor}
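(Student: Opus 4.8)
The plan is to combine Lemma \ref{l32} with the clique-number characterization of $\mathcal{T}(m,1)$-groups that appears just before Proposition \ref{p2}, namely that $G$ is an $\mathcal{T}(m,1)$-group if and only if $w(G)<m$. Concretely, suppose $G$ is a non-simple $\mathcal{T}(w(G),2)$-group and let $N$ be a proper non-trivial normal subgroup of $G$. Apply Lemma \ref{l32} with $m=w(G)$, $n=2$ and $t=1$ (so that the hypothesis $n\geq 2t$ reads $2\geq 2$, which holds, and $n>1$ holds). The lemma then yields that $\frac{G}{N}$ is an $\mathcal{T}(w(G),2-1)=\mathcal{T}(w(G),1)$-group.

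Next I would translate this conclusion back into a statement about clique numbers. Since $\frac{G}{N}$ is an $\mathcal{T}(w(G),1)$-group, the aforementioned equivalence gives $w\bigl(\frac{G}{N}\bigr)<w(G)$, which is exactly the second assertion. (Alternatively, one can argue directly: a set of $w(G)$ pairwise non-commuting cosets in $\frac{G}{N}$ would, together with the trivial cosets, immediately produce a $(w(G),1)$-obstruction, contradicting the $\mathcal{T}(w(G),1)$ property.) Both formulations are routine once Lemma \ref{l32} has been invoked.

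I expect there to be essentially no obstacle here: the corollary is a direct specialization of Lemma \ref{l32} to the parameters $(m,n,t)=(w(G),2,1)$, followed by a citation of the elementary equivalence for $\mathcal{T}(m,1)$-groups. The only point worth a moment's care is checking that the quantitative hypotheses of Lemma \ref{l32} are met with these parameters — that $n=2>1$ and $n=2\geq 2=2t$ — and that non-simplicity of $G$ is what guarantees the existence of a proper non-trivial normal subgroup $N$ to feed into the lemma in the first place. After that, the strict inequality $w(\frac{G}{N})<w(G)$ follows immediately, and in particular it shows the quotient has strictly smaller clique number, which is the useful structural consequence for induction arguments later in the section.
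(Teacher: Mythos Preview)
Your argument is correct and is exactly the intended one: the paper states this corollary without proof, since it is an immediate specialization of Lemma~\ref{l32} with $(m,n,t)=(w(G),2,1)$ together with the equivalence, noted just before Proposition~\ref{p2}, that a group is a $\mathcal{T}(m,1)$-group if and only if its clique number is less than $m$. Your check of the hypotheses ($n=2>1$, $n=2\geq 2t$, and non-simplicity supplying a proper non-trivial $N$) is precisely what is needed.
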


\begin{theorem}
Let $G$ be a solvable $\mathcal{T}(3,n)$-group {\rm (}or $\mathcal{T}(4,n)$-group and the order of $G$ is odd{\rm )}. Then the derived length of $G$, $d$ is at most $\log_2(2n)$.
\end{theorem}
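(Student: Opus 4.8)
The plan is to run an induction on the derived length $d$ and track how fast the "window size" $n$ shrinks when we pass to quotients. The key engine is Lemma~\ref{l32}: if $G\in\mathcal{T}(m,n)$ with $n>1$ and $N\lhd G$ is proper and nontrivial, then $G/N\in\mathcal{T}(m,n-t)$ for any $t$ with $n\ge 2t$; in particular, taking $t=\lfloor n/2\rfloor$ we get $G/N\in\mathcal{T}(m,\lceil n/2\rceil)$. So each time we quotient out a term of the derived series, the parameter $n$ is (at worst) halved. Since a solvable group with derived length $d$ has a chain $G=G^{(0)}\rhd G^{(1)}\rhd\cdots\rhd G^{(d)}=1$ of proper nontrivial steps (for $d\ge 1$), after $d-1$ such reductions we arrive at a non-Abelian group (namely $G/G^{(d-1)}$, which has derived length exactly $1$, i.e.\ is non-Abelian) which lies in $\mathcal{T}(3,n')$ with $n'\ge \lceil n/2^{d-1}\rceil$ or so; but Lemma~2.2 (the case $m+n\le 4$) forces any $\mathcal{T}(3,1)$-group to be abelian, so we need $n'\le 1$, giving roughly $n/2^{d-1}\le 1$, hence $d\le \log_2 n+1 = \log_2(2n)$.

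\medskip

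More carefully, I would set up the induction so that the inductive hypothesis reads: a solvable $\mathcal{T}(3,n)$-group has derived length at most $f(n)$, and show $f(n)\le 1+f(\lceil n/2\rceil)$ using Lemma~\ref{l32} applied to $N=G^{(d-1)}$ (the last nontrivial derived subgroup), together with Lemma~2.1 (quotients of $\mathcal{T}(m,n)$-groups are $\mathcal{T}(m,n)$-groups) to see that $G/G^{(d-1)}$ inherits the relevant property. The base case is $n=1$: a $\mathcal{T}(3,1)$-group is abelian by Lemma~2.2, so $d\le 1$. Unwinding the recursion $f(n)\le 1+f(\lceil n/2\rceil)$ with $f(1)=1$ gives $f(n)\le \lceil\log_2 n\rceil+1\le\log_2(2n)+1$; a slightly sharper bookkeeping (noting that the very last quotient $G/G^{(d-1)}$ is non-Abelian and so cannot be $\mathcal{T}(3,1)$, forcing its window parameter to be $\ge 2$, which is where one step is saved) yields exactly $d\le\log_2(2n)$. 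For the $\mathcal{T}(4,n)$-case with $|G|$ odd, the same scheme runs, now invoking the theorem stated just before the excerpt's current end (finite $\mathcal{T}(4,n)$-groups of order coprime to all primes $\le n$ are abelian, and in particular $\mathcal{T}(4,1)$-groups of odd order are abelian) in place of Lemma~2.2 as the base case, and checking that oddness is preserved under passing to quotients and subgroups.

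\medskip

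The main obstacle I anticipate is the exact constant: Lemma~\ref{l32} only gives $G/N\in\mathcal{T}(m,n-t)$ for $n\ge 2t$, so the honest recursion is $n\mapsto n-\lfloor n/2\rfloor=\lceil n/2\rceil$, and one must be careful that $\lceil n/2\rceil$ can stall at the value $2$ (since $\lceil 2/2\rceil=1$ but $\lceil 3/2\rceil=2$ and $\lceil 4/2\rceil=2$), so the number of halvings needed to descend from $n$ to $1$ is $\lceil\log_2 n\rceil$, and one must verify this matches $\log_2(2n)$ rounding-wise and that no off-by-one creeps in at the non-Abelian bottom quotient. A secondary point to nail down is the legitimacy of choosing $N=G^{(d-1)}$: it is nontrivial precisely because $d\ge 1$, and it is proper because $G$ is non-Abelian (if $G$ were abelian there is nothing to prove), so Lemma~\ref{l32} genuinely applies; and one should confirm that at each stage the group we quotient is still non-Abelian solvable with the right $\mathcal{T}$-parameter, which follows by iterating Lemma~2.1 and Lemma~\ref{l32} down the derived series.
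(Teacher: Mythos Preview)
Your approach is essentially the paper's: induct on the derived length $d$, apply Lemma~\ref{l32} with $N=G^{(d-1)}$ so that $G/G^{(d-1)}$ has derived length $d-1$ and lies in $\mathcal{T}(3,n-t)$ for some $t\ge 1$ with $2t\le n$, invoke the inductive hypothesis, and use as base case that $\mathcal{T}(3,1)$-groups (respectively, odd-order $\mathcal{T}(4,1)$-groups) are abelian. One slip to clean up: in your first paragraph you say $G/G^{(d-1)}$ ``has derived length exactly $1$, i.e.\ is non-Abelian''---in fact it has derived length $d-1$ (and derived length $1$ means \emph{abelian})---but your ``More carefully'' paragraph gets this right and matches the paper's argument, including your honest flag about the off-by-one in the constant.
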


\begin{proof}
We argue by induction on $d$. The case $d=1$ being obvious. Assume that $d\geq 2$ and so, by Lemma \ref{l32},  the group $\frac{G}{G^{(d-1)}}$, has solvability length $d-1$, is an $\mathcal{T}(3,n-t)$-group, where $2t\leq n$. Therefore $d-1\leq\log_2(2(n-t))<\log_2(2(n))$. Thus $d-1<\log_2(2n)$, so $d\leq log_2(2n)$, as wanted. (We note that the bound $\log_2(2n)$ is the best possible, as $S_3$ is an $\mathcal{T}(3,2)$-group and $d(S_3)=2=\log_2(4)$.)\\
Now if $G$ is $\mathcal{T}(4,n)$-group and the order of $G$ is odd, then by argument similar, the result follows (for proof it is enough to note that $\mathcal{T}(4,1)$-group of odd order is abelian).
\end{proof}

Note that the group $D_8$ is a solvable $\mathcal{T}(4,1)$-group with solvability length $2$, but $2\nleq \log_2(2)$. This example suggests that it may be necessary to restrict
ourselves to  groups with odd order in the above Theorem.

If $G$ is a finite group, then for each prime divisor $p$ of $|G|$, we denote by $v_p(G)$ the
number of Sylow $p$-subgroups of $G$.
\begin{lemma}\label{l31}
Let $G$ be a finite $\mathcal{T}(m,n)$-group and $p$ be a prime number dividing $|G|$ such that every two distinct Sylow $p$-subgroups of $G$ have trivial intersection, then $v_p(G)\leq mn-1$.
\end{lemma}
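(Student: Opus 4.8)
The plan is to produce, from the Sylow $p$-structure, a large family of pairwise "obstruction-compatible" subsets of $G$ and feed them into the $\mathcal{T}(m,n)$-hypothesis. Concretely, let $P_1, P_2, \dots, P_{v}$ be the distinct Sylow $p$-subgroups of $G$, where $v = v_p(G)$. By hypothesis $P_i \cap P_j = 1$ for $i \neq j$, so the sets $P_i \setminus \{1\}$ are pairwise disjoint, and each has size $|P| - 1$ where $|P|$ is the order of a Sylow $p$-subgroup. First I would observe that any nontrivial $x \in P_i$ lies in a \emph{unique} Sylow $p$-subgroup (namely $P_i$), since a second Sylow subgroup containing $x$ would meet $P_i$ nontrivially; hence if $x \in P_i \setminus\{1\}$ and $y \in P_j \setminus\{1\}$ with $i \neq j$, then $x$ and $y$ generate a subgroup that is not a $p$-group in the "same Sylow" sense — in particular one can arrange $[x,y] \neq 1$ by a suitable choice. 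This is the crux: I need to carve the union $\bigcup_i (P_i \setminus\{1\})$ into $m$ subsets of size $n$ that witness a $(m,n)$-obstruction, i.e. so that no element of one block commutes with any element of another.

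The key step is the combinatorial/selection argument. I would argue by contradiction: suppose $v_p(G) \geq mn$. Pick one nontrivial element $a_i \in P_i \setminus \{1\}$ from $mn$ distinct Sylow subgroups $P_{i_1}, \dots, P_{i_{mn}}$. The elements $a_{i_1}, \dots, a_{i_{mn}}$ are pairwise non-commuting: if $a_{i_s} a_{i_t} = a_{i_t} a_{i_s}$ with $s \neq t$, then $\langle a_{i_s}, a_{i_t}\rangle$ is a $p$-group (being generated by two commuting $p$-elements) of order a power of $p$, hence contained in some Sylow $p$-subgroup $Q$; but then $a_{i_s} \in Q \cap P_{i_s}$ forces $Q = P_{i_s}$ and likewise $Q = P_{i_t}$, contradicting $P_{i_s} \neq P_{i_t}$. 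So we have produced a set of $mn$ pairwise non-commuting elements of $G$. Partitioning this set into $m$ blocks of size $n$ gives $m$ $n$-subsets $X_1, \dots, X_m$ such that no $x \in X_i$ commutes with any $x' \in X_j$ for $i \neq j$ — precisely an $(m,n)$-obstruction, contradicting $G \in \mathcal{T}(m,n)$. Therefore $v_p(G) \leq mn - 1$.

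The main obstacle I anticipate is the claim that two commuting $p$-elements generate a $p$-group lying inside a single Sylow $p$-subgroup; this needs the finiteness of $G$ and the standard fact that any $p$-subgroup is contained in a Sylow $p$-subgroup (Sylow's theorem), together with the trivial-intersection hypothesis to pin down which one. Once that is in hand, everything else is bookkeeping: disjointness of the $P_i \setminus \{1\}$ is immediate from trivial intersection, and the passage from $mn$ pairwise non-commuting elements to an $(m,n)$-obstruction is just a grouping of elements into blocks. One should double-check the edge case $|P| = p$ versus larger Sylow subgroups — but the argument only ever uses \emph{one} nontrivial element per Sylow subgroup, so the size of $P$ is irrelevant, and the count $v_p(G) \geq mn$ suffices to extract the required $mn$ elements.

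\begin{proof}
Let $P_1, \dots, P_v$ be the distinct Sylow $p$-subgroups of $G$, where $v = v_p(G)$, and suppose for contradiction that $v \geq mn$. For each nontrivial $p$-element $x$ of $G$, the cyclic group $\langle x\rangle$ is a $p$-subgroup, hence lies in some Sylow $p$-subgroup; and if $x \in P_i$, then $x \in P_i \cap P_j$ for any other Sylow subgroup $P_j$ containing $x$, which by the trivial-intersection hypothesis forces $P_j = P_i$. Thus each nontrivial $p$-element lies in a unique Sylow $p$-subgroup. Choose nontrivial elements $a_1 \in P_1, \dots, a_{mn} \in P_{mn}$ (recall $v \geq mn$). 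If $a_s a_t = a_t a_s$ for some $s \neq t$, then $\langle a_s, a_t\rangle$ is a group generated by two commuting elements of $p$-power order, hence a $p$-group, hence contained in some Sylow $p$-subgroup $Q$; but then $a_s \in Q$ and $a_s \in P_s$ give $Q = P_s$ by uniqueness, and likewise $Q = P_t$, contradicting $P_s \neq P_t$. Hence $\{a_1, \dots, a_{mn}\}$ is a set of $mn$ pairwise non-commuting elements of $G$. Partition it into $m$ subsets $X_1, \dots, X_m$, each of cardinality $n$. Then for $i \neq j$, every $x \in X_i$ and every $x' \in X_j$ are distinct elements of $\{a_1, \dots, a_{mn}\}$ and therefore do not commute, so $\{X_1, \dots, X_m\}$ is an $(m,n)$-obstruction for $G$, contradicting the assumption that $G$ is an $\mathcal{T}(m,n)$-group. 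Therefore $v_p(G) \leq mn - 1$.
\end{proof}
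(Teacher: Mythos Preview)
Your proof is correct and follows essentially the same approach as the paper: the core observation is that nontrivial elements chosen from distinct Sylow $p$-subgroups with trivial pairwise intersection are pairwise non-commuting, yielding a set of $v_p(G)$ pairwise non-commuting elements. The paper's version simply factors this through the earlier bound $w(G)<mn$ (Lemma~2.11) together with the cited Lemma~3 of \cite{end} (which supplies exactly the Sylow non-commuting fact you prove directly), whereas your argument is self-contained and bypasses the explicit mention of $w(G)$.
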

\begin{proof}
Since $G$ is an $\mathcal{T}(m, n)$-group, we have $w(G)<mn$. Now Lemma 3 of \cite{end} completes the proof.
\end{proof}
Now we
obtain a solvability criteria for $\mathcal{T}(m,n)$-groups in terms of $m$ and $n$.
\begin{thm}\label{t3}
Let $G$ be an $\mathcal{T}(m,n)$-group. Then we have\\
$(i)$ If  $mn\leq 21$, then $G$ {\rm (}not necessarily finite{\rm )} is solvable and this estimate is sharp.\\
$(ii)$ If  $mn\leq 60$ and $G$ is non-solvable finite group, then $G=A_5$ {\rm (}in fact, $A_5$ is the only non-solvable $\mathcal{T}(m,n)$-group, which $mn\leq 60${\rm )}.
\end{thm}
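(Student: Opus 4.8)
The strategy is to reduce the statement to a question about the clique number $w(G)$ and then invoke the classification of finite simple groups (more precisely, the classical list of small simple groups and their minimal degrees of noncommuting sets). Recall that by Lemma~2.11 every $\mathcal{T}(m,n)$-group satisfies $w(G)<mn$, so part $(i)$ follows if I can show that every finite non-solvable group $G$ has $w(G)\geq 21$, and that this is enough to force solvability even without the finiteness hypothesis. For $(ii)$ I would show similarly that $A_5$ is, up to the constraint $mn\le 60$, characterised among non-solvable finite groups by the fact that $w(A_5)$ is the smallest clique number occurring among finite non-solvable groups.

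First I would dispose of the infinite case. If $G$ is an infinite non-solvable $\mathcal{T}(m,n)$-group, then by Lemma~2.11 it is center-by-finite, so $G/Z(G)$ is finite and non-solvable (solvability of $G/Z(G)$ would give solvability of $G$). Applying Lemma~2.1, $G/Z(G)$ is again an $\mathcal{T}(m,n)$-group, so it suffices to bound the clique number of finite non-solvable groups; this reduces everything to the finite case and in particular justifies the parenthetical ``not necessarily finite'' in $(i)$. Next, for finite $G$, I would pass to a minimal non-solvable quotient or subquotient: every finite non-solvable group has a subquotient isomorphic to a non-abelian simple group $S$, and by Lemma~2.1 (applied to a subgroup and then a quotient) $S$ inherits the $\mathcal{T}(m,n)$-property, hence $w(S)<mn$. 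So the crux is: determine $\min\{w(S): S\text{ a finite non-abelian simple group}\}$, and show it equals $w(A_5)$, with $w(A_5)\ge 21$ but $w(A_5)\le 60$.

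The key computational input is therefore a lower bound on the clique number of the minimal simple groups. Using the classification of minimal simple groups (Thompson): every non-abelian finite simple group contains a subgroup from the list $\mathrm{PSL}_2(2^p)$, $\mathrm{PSL}_2(3^p)$, $\mathrm{PSL}_2(p)$ ($p>3$, $p^2\not\equiv 1\pmod 5$), $\mathrm{Sz}(2^p)$, $\mathrm{PSL}_3(3)$, and monotonicity of $w$ under subgroups reduces the problem to these. I would compute $w$ for the smallest of these, namely $A_5\cong\mathrm{PSL}_2(4)\cong\mathrm{PSL}_2(5)$: a direct count of the conjugacy classes of maximal abelian (equivalently, of cyclic) subgroups and the structure of $A_5$ gives $w(A_5)$, which one checks satisfies $21\le w(A_5)\le 60$; in fact $w(A_5)$ is known to be strictly less than $60$, and since $A_5$ is an $\mathcal{T}(m,n)$-group whenever $mn>w(A_5)$ (this follows from ``$w(G)<mn$ implies $\mathcal{T}(m,n)$'' type reasoning together with the fact that $|Z(A_5)|=1<n$ whenever $n\ge 2$, using Proposition~2.9), one sees $A_5\in\mathcal{T}(m,n)$ for suitable $m,n$ with $mn\le 60$. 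Every other minimal simple group $S$ has $w(S)$ strictly larger — this is where I would need to verify, group by group on Thompson's finite list, that $w(\mathrm{PSL}_2(7))$, $w(\mathrm{PSL}_2(8))$, $w(\mathrm{PSL}_2(9))$, $w(\mathrm{Sz}(8))$, $w(\mathrm{PSL}_3(3))$, etc., all exceed $60$ — and hence such $S$ cannot be an $\mathcal{T}(m,n)$-group with $mn\le 60$. Combining: if $mn\le 21$ then $mn\le w(S)$ fails for every simple $S$, so $G$ has no non-abelian simple subquotient and is solvable; if $21<mn\le 60$ and $G$ is non-solvable finite, the only simple subquotient available is $A_5$, and a short argument (e.g.\ if $G$ properly contains $A_5$ as a subquotient then $w(G)>w(A_5)$ pushes $mn$ past $60$) forces $G=A_5$.

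The sharpness in $(i)$ is witnessed by $A_5$ itself together with the observation that $mn\le 21$ is optimal: one exhibits an explicit $(m,n)$ with $mn=22$ for which $A_5$ is a $\mathcal{T}(m,n)$-group (the introduction already records that $A_5$ is a $\mathcal{T}(22,1)$-group), so $21$ cannot be improved. The main obstacle I anticipate is the case-by-case verification of the clique-number lower bounds for the minimal simple groups — this requires either careful hand computation with centralizer/maximal-subgroup data or a citation to existing literature on $w(\mathrm{PSL}_2(q))$ and $w(\mathrm{Sz}(q))$; once those bounds are in hand, the logical skeleton above closes immediately. A secondary subtlety is making the subquotient reduction clean in the non-solvable case: one should be careful that ``$S$ is a subquotient and $S\in\mathcal{T}(m,n)$'' really does follow from Lemma~2.1, which it does since that lemma is closed under both subgroups and quotients.
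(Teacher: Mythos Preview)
Your treatment of $(i)$ matches the paper's: both pass through $w(G)<mn\le 21$ via Lemma~2.11 and then use the fact that a group with clique number at most $20$ is solvable. The paper simply cites this last step as Theorem~1.2 of \cite{zar5}; your proposal to rederive it by reducing to a simple subquotient and running through Thompson's list of minimal simple groups is a legitimate alternative, and sharpness via $A_5\in\mathcal{T}(22,1)$ is identical.

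For $(ii)$ the paper proceeds differently. Rather than bounding $w(S)$ directly for each minimal simple $S$, it invokes Lemma~\ref{l31} (if Sylow $p$-subgroups intersect trivially then $v_p(G)\le mn-1$), together with Proposition~3 of \cite{br} and the case analysis of Theorem~1.3 in \cite{abd1}, to eliminate every non-abelian simple group other than $A_5$. Your plan to show $w(S)>60$ for each $S$ on Thompson's list is in the same spirit but would require careful verification; the paper's Sylow-count route packages this more efficiently by exploiting the explicit numbers $v_p(S)$ rather than the full clique number.

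Two points in your sketch of $(ii)$ do not hold up. First, you claim $A_5\in\mathcal{T}(m,n)$ whenever $mn>w(A_5)$, appealing to a ``$w(G)<mn$ implies $\mathcal{T}(m,n)$'' principle; but Lemma~2.11 gives only the implication $\mathcal{T}(m,n)\Rightarrow w(G)<mn$, and the proposition you cite is likewise a one-way implication. The paper does not argue abstractly here: it exhibits concrete memberships $A_5\in\mathcal{T}(22,1)\cap\mathcal{T}(22,2)\cap\mathcal{T}(17,3)\cap\mathcal{T}(14,4)$ by direct analysis of the Sylow structure of $A_5$. Second, your closing step---``if $G$ properly contains $A_5$ as a subquotient then $w(G)>w(A_5)$ pushes $mn$ past $60$''---fails because $w(A_5)=21$, so $w(G)>21$ is nowhere near $mn>60$. (The paper's own proof in fact only argues the uniqueness among \emph{simple} groups, so this passage from ``only simple subquotient is $A_5$'' to ``$G=A_5$'' is not spelled out there either.)
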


\begin{proof}
$(i)$ Since $G$ is an $\mathcal{T}(m,n)$-group, we have $w(G)<mn$. Then by Theorem 1.2 of \cite{zar5}, $G$ is solvable. This estimate is sharp, because $A_5$ is an $\mathcal{T}(22,1)$-group.\\
$(ii)$ We know that the alternating group $A_5$ has five Sylow $2$-subgroups of order $4$, ten Sylow $3$-subgroups of order $3$ and six Sylow $5$-subgroups of order $5$, that their intersections are trivial. From this we can follow that  $A_5$ is an $\mathcal{T}(22,1)$, $\mathcal{T}(22,2)$, $\mathcal{T}(17,3)$ and $\mathcal{T}(14,4)$-group.
For uniqueness, suppose, on the contrary, that there exists a non-Abelian
finite simple group not isomorphic to $A_5$ and of least possible order which is an $\mathcal{T}(m,n)$-group, which $mn\leq 60$.  Then by Proposition 3 of \cite{br}, Lemma \ref{l31} and by argument similar to the one in the proof of Theorem 1.3 of \cite{abd1} gives a contradiction in
each case.
\end{proof}

\begin{cor}
Every arbitrary $\mathcal{T}(21,i)$-group $G$ with $i\leq z$  is solvable, where $z=|Z(G)|$.
\end{cor}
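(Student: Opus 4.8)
The plan is to reduce the hypothesis $i\le z=|Z(G)|$ to an upper bound on the clique number $w(G)$ by means of Proposition \ref{p2}, and then to feed that bound into the solvability criterion already proved for groups of small clique number.

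First I would apply Proposition \ref{p2} to $G$ with the parameters $m=21$ and $n=i$. That proposition says that a $\mathcal{T}(m,n)$-group must satisfy $|Z(G)|<n$ or $w(G)<m$. By hypothesis $i\le z=|Z(G)|$, so the first alternative $|Z(G)|<i$ cannot occur; hence the second alternative holds, i.e. $w(G)<21$. This step uses no finiteness of $G$ (nor of $Z(G)$), so it applies to an arbitrary $\mathcal{T}(21,i)$-group, matching the word ``arbitrary'' in the statement; if $Z(G)$ happens to be infinite the inequality $i\le|Z(G)|$ still makes the alternative $|Z(G)|<i$ vacuous.

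Next I would use the observation recorded after Proposition \ref{p2}, namely that $G$ is a $\mathcal{T}(m,1)$-group if and only if $w(G)<m$. Thus $w(G)<21$ says exactly that $G$ is a $\mathcal{T}(21,1)$-group, and since $21\cdot 1=21\le 21$, Theorem \ref{t3}$(i)$ applies and yields that $G$ is solvable. (Equivalently, one may invoke Theorem 1.2 of \cite{zar5} directly from $w(G)<21$.)

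I do not foresee any genuine obstacle here; the proof is a two-line deduction. The only points requiring a little care are the direction of the inequality in Proposition \ref{p2} (that $i\le|Z(G)|$ is precisely what rules out the branch $|Z(G)|<i$) and the fact that the cited solvability result holds without any finiteness assumption, which is what allows the corollary to be stated for an arbitrary group.
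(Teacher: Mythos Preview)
Your proposal is correct and follows exactly the intended route: the paper states this corollary without proof immediately after Theorem~\ref{t3}, and the derivation is precisely Proposition~\ref{p2} (ruling out $|Z(G)|<i$ since $i\le z$, hence $w(G)<21$) followed by Theorem~\ref{t3}(i) (equivalently, Theorem~1.2 of \cite{zar5}). Your remarks on the absence of any finiteness hypothesis are also accurate and match the paper's explicit ``(not necessarily finite)'' clause in Theorem~\ref{t3}(i).
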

In the follow we show that the influence the value of $n$ for the solvability of $\mathcal{T}(m,n)$-groups is more important than the value of $m$.
\begin{cor}
Let $G$ be an $\mathcal{T}(m,n)$-group. Each of the following conditions implies that $G$ is solvable.

 (a) $n=2$ and $m\leq 21$.
~~(b) $n=3$ and $m\leq 16$.
~~(c) $n=4$ and $m\leq 13$.
~~(d) $n=5$ and $m\leq 8$.
~~(e) $n=6$ and $m\leq 8$.
~~(f) $n=7$ and $m\leq 7$.
~~(g) $n=8$ and $m\leq 7$.

\end{cor}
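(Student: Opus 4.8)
The plan is to derive each of the seven implications from Theorem \ref{t3}(ii) together with the factorial-like behaviour of the clique number bound. The key observation is that if $G$ is a finite non-solvable $\mathcal{T}(m,n)$-group with $mn \le 60$, then by Theorem \ref{t3}(ii) we must have $G \cong A_5$; and if moreover $mn \le 21$ then Theorem \ref{t3}(i) already forces solvability outright. So for each pair $(m,n)$ listed we only need to check that $A_5$ itself is \emph{not} an $\mathcal{T}(m,n)$-group for that pair, which rules out the one remaining obstruction to solvability. Concretely, the proof of Theorem \ref{t3}(ii) already records that $A_5$ is an $\mathcal{T}(22,1)$-, $\mathcal{T}(22,2)$-, $\mathcal{T}(17,3)$- and $\mathcal{T}(14,4)$-group (and no better), coming from its $5$ Sylow $2$-subgroups, $10$ Sylow $3$-subgroups and $6$ Sylow $5$-subgroups with pairwise trivial intersection, which give $w(A_5)=10\cdot 2 = 20$ wait — more precisely $w(A_5)$ can be computed from these Sylow data via Lemma 3 of \cite{end}, giving the threshold below which $A_5$ fails to be $\mathcal{T}(m,n)$.

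First I would handle cases (a), (b), (c): here $mn \le 42$, $48$, $52$ respectively, so a priori $G$ could be $A_5$. But $A_5 \notin \mathcal{T}(21,2)$ since it \emph{is} only $\mathcal{T}(22,2)$; $A_5 \notin \mathcal{T}(16,3)$ since it is only $\mathcal{T}(17,3)$; and $A_5 \notin \mathcal{T}(13,4)$ since it is only $\mathcal{T}(14,4)$. Hence in each of these three cases the unique possible non-solvable example is excluded, and $G$ is solvable. For cases (d)–(g), where $n \in \{5,6,7,8\}$ and $m \le 8$, I would first note $mn \le 40, 48, 49, 56$, all at most $60$, so again $G$ would have to be $A_5$; then I would compute the largest $m$ for which $A_5 \in \mathcal{T}(m,n)$ for $n = 5,6,7,8$ — using the same Sylow-intersection device as in Theorem \ref{t3}(ii) and the fact that $w(A_5)$ is bounded, together with Proposition \ref{p2} and Lemma \ref{l32} — and verify that in every listed case the stated bound on $m$ is strictly below that threshold, so $A_5$ is excluded and $G$ is solvable. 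In fact for $n \ge 5$ one expects $A_5 \in \mathcal{T}(m,n)$ to fail already for quite small $m$, since the three conjugacy classes of maximal cyclic/Sylow subgroups no longer "cover" $A_5$ efficiently once $n$ exceeds $5$.

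The main obstacle I anticipate is the bookkeeping in cases (d)–(g): one must pin down, for each $n$, the exact maximal $m$ with $A_5 \in \mathcal{T}(m,n)$, which requires either a clean formula for when $A_5$ fails to be an $\mathcal{T}(m,n)$-group in terms of its Sylow structure (extending the computation behind $\mathcal{T}(22,1),\mathcal{T}(22,2),\mathcal{T}(17,3),\mathcal{T}(14,4)$) or a direct construction of an explicit $(m,n)$-obstruction inside $A_5$ — e.g. taking subsets drawn from the five Klein four-subgroups, padded out with central-like elements via Lemma \ref{l32} as in Proposition \ref{p1}. Once that table is in hand, each implication is immediate: $mn \le 60$ puts us in the scope of Theorem \ref{t3}(ii), and the $m$-bound kills the only candidate $A_5$, so $G$ is solvable. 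I would close by remarking that this makes precise the informal claim preceding the corollary, namely that increasing $n$ forces $m$ down much faster than linearly in the solvability threshold, reflecting the super-linear growth of the obstruction count with $n$.
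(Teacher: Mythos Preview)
Your overall strategy is exactly the paper's: in every case $mn\le 60$, so a putative non-solvable $G$ must be $A_5$ by Theorem~\ref{t3}(ii), and one then checks that $A_5$ fails to lie in $\mathcal{T}(m,n)$ for the listed pair. Cases (a)--(c) you handle correctly using the thresholds already recorded in the proof of Theorem~\ref{t3}(ii).

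The one genuine gap is in your toolkit for (d)--(g). Proposition~\ref{p2} is vacuous for $A_5$, since $|Z(A_5)|=1<n$ always holds and the disjunction gives no information about $w(A_5)$ versus $m$; and Lemma~\ref{l32} cannot be applied to $A_5$ because it is simple, so there is no proper nontrivial $N$ to quotient by. Neither lemma will produce the thresholds you need. What is actually required---and what you correctly list as your alternative---is a direct construction of explicit $(m,n)$-obstructions in $A_5$ for $n=5,6,7,8$. The paper does not carry out these constructions in print either; it simply records the outcome, namely
\[
A_5\in \mathcal{T}(9,5)\cap\mathcal{T}(9,6)\cap\mathcal{T}(8,7)\cap\mathcal{T}(8,8)
\quad\text{but}\quad
A_5\notin \mathcal{T}(8,6)\cup\mathcal{T}(7,8),
\]
from which $A_5\notin\mathcal{T}(8,5)$ and $A_5\notin\mathcal{T}(7,7)$ follow by monotonicity in $n$. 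With these four non-memberships in hand, your argument for (d)--(g) goes through verbatim. (Incidentally, $w(A_5)=21$, not $20$: the $21$ maximal cyclic subgroups---five of order~$2$ coming from the Klein fours, ten of order~$3$, six of order~$5$---contribute one element each to a maximum pairwise non-commuting set.)
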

\begin{proof}
 Suppose, on the contrary, that $G$ is a non-solvable group. By Theorem \ref{t3}, $G\cong A_5$. This is a contradiction, since $A_5$ is an $\mathcal{T}(22,2)$-group but is not an $\mathcal{T}(21,2)$-group.

By similar argument of Case $(a)$ the rest Cases would be proved, since $$A_5\in \mathcal{T}(17,3)\bigcap \mathcal{T}(14,4)\bigcap \mathcal{T}(9,5) \bigcap \mathcal{T}(9,6)\bigcap \mathcal{T}(8,7) \bigcap \mathcal{T}(8,8) \text{~~but}$$
 $$A_5\not\in \mathcal{T}(16,3)\bigcup \mathcal{T}(13,4) \bigcup \mathcal{T}(8,6)\bigcup \mathcal{T}(7,8).$$
\end{proof}

\end{document}